\numberwithin{equation}{section}
\newcommand{\dv}{\operatorname{div}}
\def\bu{\mathbf{u}}
\def\bv{\mathbf{v}}
\def\bw{\mathbf{w}}
\def\bW{\mathbf{W}}
\def\bU{\mathbf{U}}
\def\bV{\mathbf{V}}
\def\bx{\mathbf{x}}
\def\bz{\mathbf{z}}
\def\by{\mathbf{y}}
\def\ba{\mathbf{a}}
\def\bbf{\mathbf{f}}
\def\R{\mathbb{R}}
\def\bZ{\mathbf{Z}}
\def\bbD{\mathbb{D}}
\def\bbH{\mathbf H}
\def\bbH{\mathbb H}
\def\T{\mathbf T}
\def\bA{\mathbf A}
\def\C{\mathcal C}
\def\bbS{\mathbb S}
\def\cS{\mathcal S}
\def\bcero{\mathbf 0}
\newtheorem{theorem}{Theorem}[section]
\newtheorem{definition}{Definition}[section]
\newtheorem{lemma}[theorem]{Lemma} 
\newtheorem{proposition}[theorem]{Proposition}
\begin{document}


\title[Uniqueness]{On a general variational framework for existence and uniqueness in Differential Equations}

\author{Pablo Pedregal}
\date{} 
\thanks{Supported by the Spanish Ministerio de Ciencia, Innovación y Universidades through project MTM2017-83740-P}
\begin{abstract}
Starting from the classic contraction mapping principle, we establish a general, flexible, variational setting that turns out to be applicable to many situations of existence in Differential Equations. We show its potentiality with some selected examples including initial-value, Cauchy problems for ODEs; non-linear, monotone PDEs; linear and non-linear hyperbolic problems; and steady Navier-Stokes systems. 
\end{abstract}

\maketitle
\section{Introduction}
Possibly the most fundamental result yielding existence and uniqueness of solutions of an equation is the classic Banach contraction mapping principle.
\begin{theorem}\label{contraction}
Let $\T:\bbH\to\bbH$ be a mapping from a Banach space $\bbH$ into itself that is contractive in the sense
$$
\|\T\bx-\T\by\|\le k\|\bx-\by\|,\quad k\in[0, 1), \bx, \by\in\bbH.
$$
Then $\T$ admits a unique fixed point $\overline\bx\in\bbH$,
$$
\T\overline\bx=\overline\bx.
$$
\end{theorem}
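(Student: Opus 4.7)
The plan is to construct the fixed point as the limit of a Picard-style iteration and then verify the two assertions (existence and uniqueness) separately, exploiting only the contraction estimate and the completeness of $\bbH$.

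First I would pick an arbitrary starting point $\bx_0\in\bbH$ and define recursively $\bx_{n+1}=\T\bx_n$. Applying the contraction hypothesis inductively gives the bound
\[
\|\bx_{n+1}-\bx_n\|\le k\|\bx_n-\bx_{n-1}\|\le\cdots\le k^n\|\bx_1-\bx_0\|,
\]
so that for any $m>n$ the triangle inequality together with the geometric series yields
\[
\|\bx_m-\bx_n\|\le\sum_{j=n}^{m-1}k^j\|\bx_1-\bx_0\|\le\frac{k^n}{1-k}\|\bx_1-\bx_0\|.
\]
Since $k\in[0,1)$, the right-hand side tends to $0$ as $n\to\infty$, so $\{\bx_n\}$ is a Cauchy sequence. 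This is the step that does the essential work; everything else is a clean consequence.

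Next, using the completeness of $\bbH$, I would let $\overline\bx\in\bbH$ be the limit of $\{\bx_n\}$. The contraction inequality implies that $\T$ is (Lipschitz) continuous, so passing to the limit in the recursion $\bx_{n+1}=\T\bx_n$ gives $\overline\bx=\T\overline\bx$, producing the desired fixed point. For uniqueness, if $\by\in\bbH$ also satisfies $\T\by=\by$, then
\[
\|\overline\bx-\by\|=\|\T\overline\bx-\T\by\|\le k\|\overline\bx-\by\|,
\]
and since $k<1$ this forces $\|\overline\bx-\by\|=0$, i.e., $\by=\overline\bx$.

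I do not anticipate any genuine obstacle here; the only subtle point is ensuring the geometric estimate is applied uniformly in $m,n$ so that the Cauchy property is clear, and noting that Lipschitz continuity of $\T$ (itself an immediate consequence of contractivity) is what legitimizes passing to the limit inside $\T$.
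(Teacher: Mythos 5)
Your proof is correct: it is the classical Picard-iteration argument (geometric decay of successive differences, Cauchy property via the summed geometric series, completeness, Lipschitz continuity of $\T$ to pass to the limit, and contractivity for uniqueness), and the uniform estimate $\|\bx_m-\bx_n\|\le \frac{k^n}{1-k}\|\bx_1-\bx_0\|$ is handled properly. Note, however, that the paper does not write out this proof at all: it dismisses it as ``well-known, elementary'' and instead, in the introduction, recovers Theorem \ref{contraction} as a corollary of its variational Proposition \ref{basica} by taking $E(\bx)=\|\T\bx-\bx\|$. There the triangle inequality plus contractivity give the enhanced coercivity $\|\bx-\by\|\le\frac{1}{1-k}(E(\bx)+E(\by))$, the iterates are used only to show $\inf E=0$ (since $E(\bx_{j+1})\le kE(\bx_j)$), and the Cauchy/completeness step is absorbed into the proof of Proposition \ref{basica} applied to an arbitrary minimizing sequence. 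So your route and the paper's are close relatives --- both ultimately rest on producing a Cauchy sequence and invoking completeness --- but yours computes the geometric series for the specific orbit of $\T$, while the paper's version replaces that computation by the coercivity inequality for the error functional, which is precisely the abstraction it then exploits throughout the rest of the article; your argument, on the other hand, is more self-contained and yields the familiar explicit a priori rate for the iterates.
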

The proof is well-known, elementary, and independent of dimension. The most fascinating issue is that this basic principle is at the heart of many uniqueness results in 
Applied Analysis and Differential Equations. Our aim is to stress this fact from a variational stand-point. This means that we would like to rephrase the previous principle into a variational form that could be directly and flexibly used in many of the situations where uniqueness of solutions is known or expected. Our basic principle is the following.
\begin{proposition}\label{basica}
Let $E:\bbH\to\R^+$ be a non-negative, lower semi-continuous functional in a Banach space $\bbH$, such that
\begin{equation}\label{enhcoerf}
\|\bx-\by\|\le C(E(\bx)+E(\by)),\quad C>0, \bx, \by\in\bbH.
\end{equation}
Suppose, in addition, that
\begin{equation}\label{infcero}
\inf_{\bz\in\bbH}E(\bz)=0.
\end{equation}
Then there is a unique minimizer, i.e. a unique $\overline\bx\in\bbH$ such that $E(\overline\bx)=0$, and
\begin{equation}\label{errorint}
\|\bx-\overline\bx\|\le CE(\bx),\quad \bx\in\bbH.
\end{equation}
\end{proposition}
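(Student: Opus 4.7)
The plan is to extract existence, uniqueness, and the error estimate directly from hypothesis \eqref{enhcoerf}, which is essentially a Cauchy-type estimate built into the functional $E$ itself.

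First, for existence, I would take a minimizing sequence $\{\bx_n\}\subset\bbH$ with $E(\bx_n)\to 0$, which is available by \eqref{infcero}. Applying \eqref{enhcoerf} to the pair $(\bx_n,\bx_m)$ gives
\[
\|\bx_n-\bx_m\|\le C(E(\bx_n)+E(\bx_m)),
\]
and since the right-hand side tends to $0$ as $n,m\to\infty$, the sequence is Cauchy in $\bbH$. Completeness of the Banach space yields a limit $\overline{\bx}\in\bbH$. The lower semi-continuity of $E$ then gives
\[
0\le E(\overline{\bx})\le\liminf_{n\to\infty}E(\bx_n)=0,
\]
so $E(\overline{\bx})=0$, producing the desired minimizer.

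For uniqueness, if $\overline{\bx}_1$ and $\overline{\bx}_2$ both satisfy $E(\overline{\bx}_i)=0$, then \eqref{enhcoerf} applied to the pair $(\overline{\bx}_1,\overline{\bx}_2)$ collapses to $\|\overline{\bx}_1-\overline{\bx}_2\|\le 0$, forcing equality. Finally, for the error estimate \eqref{errorint}, I simply apply \eqref{enhcoerf} to the pair $(\bx,\overline{\bx})$ and use $E(\overline{\bx})=0$ to get $\|\bx-\overline{\bx}\|\le C(E(\bx)+0)=CE(\bx)$.

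Overall, there is no really hard step: the proposition is engineered so that condition \eqref{enhcoerf} does all the work, producing Cauchyness of minimizing sequences, forcing uniqueness, and implying the error bound essentially as corollaries. The only point that requires a moment's care is making sure lower semi-continuity is invoked correctly to conclude $E(\overline{\bx})=0$ from $E(\bx_n)\to 0$, which is standard.
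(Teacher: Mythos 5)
Your proof is correct and follows exactly the paper's own argument: a minimizing sequence is Cauchy by \eqref{enhcoerf}, its limit is a minimizer by lower semi-continuity, and both uniqueness and the estimate \eqref{errorint} follow by applying \eqref{enhcoerf} with a zero of $E$ in one slot. Nothing to add.
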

The proof again is elementary, because every minimizing sequence $\{\bx_j\}$ with $E(\bx_j)\searrow0$ must be a Cauchy sequence in $\bbH$, according to \eqref{enhcoerf}, and so it converges to some $\overline\bx\in\bbH$. The lower semicontinuity implies that
$$
0\le E(\overline\bx)\le\liminf_{j\to\infty}E(\bx_j)=0,
$$
and $\overline\bx$ is a minimizer. Condition \eqref{enhcoerf} implies automatically that such minimizer is unique, and leads to \eqref{errorint}. 

Condition \eqref{errorint} is a very clear statement that functional $E$ in Proposition \ref{basica} is a measure of how far we are from $\overline\bx$, the unique point where $E$ vanishes. Indeed, this consequence already points in the direction in which to look for functionals $E$ in specific situations: they should be setup as a way to measure departure from solutions sought. This will be taken as a guiding principle in concrete examples. The usual least-square method (see \cite{bogunz}, \cite{glowinski}, for example), suitably adapted to each situation, stands as a main, natural possibility for $E$. 

It is not surprising  that Proposition \ref{basica} is more general than Theorem \ref{contraction}, in the sense that the latter is a consequence of the former by considering the natural functional
\begin{equation}\label{error}
E(\bx)=\|\T\bx-\bx\|.
\end{equation}
Indeed, for an arbitrary pair $\bx, \by\in\bbH$, 
$$
\|\bx-\by\|\le\|\bx-\T\bx\|+\|\T\bx-\T\by\|+\|\T\by-\by\|,
$$
and
$$
\|\bx-\by\|\le E(\bx)+E(\by)+k\|\bx-\by\|.
$$
From here, we immediately find \eqref{enhcoerf}
$$
\|\bx-\by\|\le \frac1{1-k}(E(\bx)+E(\by)).
$$
Along every sequence of iterates, we have \eqref{infcero} if $\T$ is contactive. Of course, minimizers for $E$ in \eqref{error} are exactly fixed points for $\T$. 

Our objective is to argue that the basic variational principle in Proposition \ref{basica} is quite flexible, and can be implemented in many of the situations in Differential Equations where uniqueness of solutions is known. 

There are two main requisites in Proposition \ref{basica}. The first one \eqref{enhcoerf} has to be shown directly in each particular scenario where uniqueness is sought. Note that it is some kind of enhanced coercivity, and, as such, stronger than plain coercivity. 
Concerning \eqref{infcero}, there is, however, a general strategy based on smoothness that can be applied to most of the interesting situations in practice. For the sake of simplicity, we restrict attention to a Hilbert space situation, and regard $\bbH$ as a Hilbert space henceforth. 
If a non-negative functional $E:\bbH\to\R^+$ is $\C^1$-, then 
$$
\inf_{\bx\in\bbH}\|E'(\bx)\|=0.
$$
Therefore, it suffices to demand that
$$
\lim_{E'(\bx)\to\bcero}E(\bx)=0
$$
to enforce \eqref{infcero}. Proposition \ref{basica} becomes then:
\begin{proposition}\label{basicad}
Let $E:\bbH\to\R^+$ be a non-negative, $\C^1$- functional in a Hilbert space $\bbH$, such that
\begin{equation}\label{enhcoers}
\|\bx-\by\|\le C(E(\bx)+E(\by)),\quad C>0, \bx, \by\in\bbH.
\end{equation}
Suppose, in addition, that
\begin{equation}\label{infceros}
\lim_{E'(\bx)\to\bcero}E(\bx)=0.
\end{equation}
Then there is a unique $\overline\bx\in\bbH$ such that $E(\overline\bx)=0$, and
$$
\|\bx-\overline\bx\|\le CE(\bx)
$$
for every $\bx\in\bbH$. 
\end{proposition}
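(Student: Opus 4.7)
The plan is to reduce Proposition \ref{basicad} to Proposition \ref{basica} by showing that the new smoothness-based hypothesis \eqref{infceros} upgrades automatically to \eqref{infcero}. Since $E$ is $\C^1$, it is in particular continuous and hence lower semicontinuous, and the enhanced coercivity \eqref{enhcoers} is literally \eqref{enhcoerf}. So the only point that really requires an argument is
$$
\inf_{\bz\in\bbH}E(\bz)=0.
$$

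The key ingredient is the fact, already recalled just before the statement, that for a non-negative $\C^1$ functional on a Hilbert space
$$
\inf_{\bx\in\bbH}\|E'(\bx)\|=0.
$$
This is a standard consequence of Ekeland's variational principle applied to $E$, which is bounded below by $0$: for each $\epsilon>0$ one can produce a point $\bx_\epsilon\in\bbH$ with $\|E'(\bx_\epsilon)\|\le\epsilon$. In particular, there exists a sequence $\{\bx_j\}\subset\bbH$ with $\|E'(\bx_j)\|\to\bcero$. Applying hypothesis \eqref{infceros} along this sequence gives $E(\bx_j)\to0$, so $\inf_\bbH E=0$, i.e. \eqref{infcero} holds.

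Once \eqref{infcero} is established, a direct appeal to Proposition \ref{basica} produces the unique $\overline\bx\in\bbH$ with $E(\overline\bx)=0$ and the error bound $\|\bx-\overline\bx\|\le CE(\bx)$ for every $\bx\in\bbH$. The only potentially delicate step is the invocation of Ekeland's principle to manufacture almost-critical points of $E$ without any additional compactness or coercivity beyond non-negativity; everything else is a transparent translation from \eqref{infceros} to \eqref{infcero} and an inheritance of the conclusion from Proposition \ref{basica}.
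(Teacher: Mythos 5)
Your proposal is correct and follows essentially the same route as the paper: the text preceding the statement reduces Proposition \ref{basicad} to Proposition \ref{basica} by exactly your argument, namely that $\inf_{\bx\in\bbH}\|E'(\bx)\|=0$ for a non-negative $\C^1$ functional (a consequence of Ekeland's variational principle), so a sequence of almost-critical points combined with \eqref{infceros} yields \eqref{infcero}. Your write-up merely makes explicit the appeal to Ekeland and the observation that $\C^1$ gives lower semicontinuity, both of which the paper takes for granted.
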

Though the following is a simple observation, it is worth to note it explicitly. 
\begin{proposition}
Under the same conditions as in Proposition \ref{basicad}, the functional $E$ enjoys the Palais-Smale condition. 
\end{proposition}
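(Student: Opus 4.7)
The plan is to unwind the definition of the Palais-Smale condition and show directly that the two hypotheses in Proposition \ref{basicad} force any (PS) sequence to be Cauchy, and hence convergent, in $\bbH$.

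First, I would recall that a $\C^1$-functional $E:\bbH\to\R$ on a Hilbert space is said to satisfy the Palais-Smale condition if every sequence $\{\bx_n\}\subset\bbH$ such that $\{E(\bx_n)\}$ is bounded and $E'(\bx_n)\to\bcero$ in $\bbH$ admits a strongly convergent subsequence. So I would fix once and for all such a sequence $\{\bx_n\}$ and aim to prove that, in fact, the whole sequence converges.

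Next, the key observation is that hypothesis \eqref{infceros} is exactly the bridge between the differential information $E'(\bx_n)\to\bcero$ and the functional information: applied along $\{\bx_n\}$ it yields $E(\bx_n)\to 0$. (Note we do not even need the boundedness of $\{E(\bx_n)\}$ guaranteed by the (PS) setup; the limit hypothesis alone is enough.) Once this is in hand, the enhanced coercivity \eqref{enhcoers} gives
$$
\|\bx_n-\bx_m\|\le C(E(\bx_n)+E(\bx_m))\longrightarrow 0\quad\text{as }n,m\to\infty,
$$
so $\{\bx_n\}$ is Cauchy in $\bbH$, and by completeness converges to some $\overline\bx\in\bbH$, which by Proposition \ref{basicad} must be the unique zero of $E$.

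There is no real obstacle here: the proof is essentially a one-line chain using \eqref{infceros} followed by \eqref{enhcoers}, and the conclusion we get is actually stronger than (PS)—we obtain convergence of the entire sequence (not merely a subsequence), and to the single point $\overline\bx$ at which $E$ vanishes. I would conclude by remarking on this strengthening, since it makes the variational framework developed in the paper particularly well suited for critical point arguments relying on deformation or min-max schemes.
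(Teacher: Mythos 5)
Your argument is correct and coincides with the paper's own (sketched) proof: condition \eqref{infceros} turns any Palais--Smale sequence into a minimizing sequence, and the enhanced coercivity \eqref{enhcoers} then makes the whole sequence Cauchy, which is even stronger than the subsequence convergence required by (PS). The only cosmetic difference is that the paper phrases (PS) with the sequence $\{\bx_j\}$ bounded in $\bbH$ rather than $\{E(\bx_j)\}$ bounded, which changes nothing in the argument.
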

We remind readers that the fundamental Palais-Smale condition reads:
\begin{quote}
If the sequence $\{\bx_j\}$ is bounded in $\bbH$, and $E'(\bx_j)\to\bcero$ in $\bbH$, then, at least for some subsequence, $\{\bx_j\}$ converges in $\bbH$.
\end{quote}
Again, it is not difficult to suspect the proof. Condition \eqref{infceros} informs us that Palais-Smale sequences ($\{\bx_j\}$, bounded and $E'(\bx_j)\to\bcero$) are always minimizing sequences for $E$ ($E(\bx_j)\to0$), while the estimate \eqref{enhcoers} ensures that (the full) such sequence is a Cauchy sequence in $\bbH$. Notice, however, that, due to \eqref{infceros}, $0$ is the only possible critical value of $E$, and so critical points become automatically global minimizers regardless of convexity considerations.

In view of the relevance of conditions \eqref{enhcoers} and \eqref{infceros}, we adopt the following definition in which we introduce some simple, helpful changes to broaden its applicability. We also change the notation to stress that vectors in $\bbH$ will be functions for us. 

\begin{definition}\label{errorgeneral}
A non-negative, $\C^1$-functional 
$$
E(\bu):\bbH\to\R^+
$$ 
defined over a Hilbert space $\bbH$ is called an error functional if 
\begin{enumerate}
\item behavior as $E'\to\bcero$:
\begin{equation}\label{comporinff}
\lim_{E'(\bu)\to\bcero}E(\bu)=0
\end{equation}
over bounded subsets of $\bbH$; and
\item enhanced coercivity: there is a positive constant $C$, such that for every pair $\bu, \bv\in\bbH$ 
we have
\begin{equation}\label{enhcoer}
\|\bu-\bv\|^2\le C(E(\bu)+E(\bv)).
\end{equation}
\end{enumerate}
\end{definition}
Our basic result Proposition \ref{basicad} remains the same.
\begin{proposition}\label{basicas}
Let $E:\bbH\to\R^+$ be an error functional according to Definition \ref{errorgeneral}. Then there is a unique minimizer $\bu_\infty\in\bbH$ such that  $E(\bu_\infty)=0$, and
\begin{equation}\label{medidaerror}
\|\bu-\bu_\infty\|^2\le CE(\bu),
\end{equation}
for every $\bu\in\bbH$.
\end{proposition}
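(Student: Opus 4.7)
The plan is to mimic the elementary argument sketched after Proposition \ref{basica}, with one additional preliminary step needed to activate \eqref{comporinff}: we must first exhibit a bounded sequence $\{\bu_j\}\subset\bbH$ with $\|E'(\bu_j)\|\to\bcero$. Since $E$ is $\C^1$ and bounded below by $0$, Ekeland's variational principle supplies, for each $j$, a point $\bu_j\in\bbH$ with
$$
E(\bu_j)\le\inf_{\bbH}E+\tfrac{1}{j},\qquad \|E'(\bu_j)\|\le\tfrac{1}{j}.
$$
Fix any $\bv_0\in\bbH$; applying the enhanced coercivity \eqref{enhcoer} to the pair $(\bu_j,\bv_0)$ gives
$$
\|\bu_j-\bv_0\|^2\le C(E(\bu_j)+E(\bv_0))\le C\left(\inf_{\bbH}E+1+E(\bv_0)\right),
$$
so $\{\bu_j\}$ is bounded in $\bbH$. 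Now \eqref{comporinff} applies, and since $\|E'(\bu_j)\|\to\bcero$ we conclude $E(\bu_j)\to 0$, hence $\inf_{\bbH}E=0$.

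Next I would invoke \eqref{enhcoer} once more, this time on pairs $(\bu_i,\bu_j)$, to read off
$$
\|\bu_i-\bu_j\|^2\le C(E(\bu_i)+E(\bu_j))\longrightarrow 0,
$$
so $\{\bu_j\}$ is Cauchy, with limit $\bu_\infty\in\bbH$. Continuity of $E$ forces $E(\bu_\infty)=\lim_j E(\bu_j)=0$, producing the desired minimizer. Uniqueness and the error bound \eqref{medidaerror} then drop out of \eqref{enhcoer} immediately: if $E(\bu'_\infty)=0$ as well, then $\|\bu_\infty-\bu'_\infty\|^2\le C(0+0)=0$; and for an arbitrary $\bu\in\bbH$,
$$
\|\bu-\bu_\infty\|^2\le C(E(\bu)+E(\bu_\infty))=CE(\bu).
$$

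The only real obstacle is the very first step: producing a minimizing sequence that is simultaneously of Palais--Smale type, as required to trigger \eqref{comporinff}. Ekeland's principle is precisely the tool for this, but its use compels us to check that the extracted sequence lies in a bounded set of $\bbH$, since \eqref{comporinff} is only postulated over such subsets. This is exactly where the enhanced coercivity \eqref{enhcoer} earns its keep, furnishing boundedness for free from the mere fact that $E(\bu_j)$ is bounded. The remainder of the argument is then formally identical to the proofs of Propositions \ref{basica} and \ref{basicad}.
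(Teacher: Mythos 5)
Your proof is correct, and it follows the same basic strategy the paper has in mind: the paper gives no separate proof for Proposition \ref{basicas}, relying instead on the sketch after Proposition \ref{basica} (minimizing sequences are Cauchy by \eqref{enhcoer}, pass to the limit, uniqueness and \eqref{medidaerror} are immediate) together with the remark before Proposition \ref{basicad} that a non-negative $\C^1$ functional satisfies $\inf\|E'\|=0$, so that \eqref{comporinff} forces $\inf E=0$. What you add, and do well, is to make that last remark rigorous via Ekeland's variational principle and, crucially, to verify that the resulting almost-critical minimizing sequence is bounded (using \eqref{enhcoer} against a fixed $\bv_0$); this is exactly the point needed because Definition \ref{errorgeneral} only postulates \eqref{comporinff} over bounded subsets of $\bbH$, a restriction the paper's own discussion leaves implicit. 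So your argument is the paper's argument with the one genuine gap closed.
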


It is usually said that the contraction mapping principle Theorem \ref{contraction}, though quite helpful in ODEs, is almost inoperative for PDEs. We will try to make an attempt at convincing readers that, on the contrary, Proposition \ref{basicad} is equally helpful for ODEs and PDEs. To this end, we will examine several selected examples as a sample of the potentiality of these ideas. Specifically, we will look at the following situations, though none of our existence results is new at this stage:
\begin{enumerate}
\item Cauchy, initial-value problems for ODEs;
\item linear hyperbolic examples;
\item non-linear, monotone PDEs;
\item non-linear wave models;
\item steady Navier-Stokes system.
\end{enumerate}
We systematically will have to check the two basic properties \eqref{enhcoer} and \eqref{comporinff} in each situation treated. 
We can be dispensed with condition \eqref{comporinff}, and replace it by \eqref{infcero} if more general results not requiring smoothness are sought. On the other hand, in many regular situations linearization may lead in a systematic way to the following.
\begin{proposition}\label{principalbis}
Let 
$$
E(\bu):\bbH\to\R^+
$$ 
be a $\C^1$-functional verifying the enhanced coercivity condition \eqref{enhcoer}. 
Suppose there is 
$\T:\bbH\to\bbH$, 
a locally Lipschitz operator, such that
\begin{equation}\label{propoper}
\langle E'(\bu), \T\bu\rangle=-dE(\bu)
\end{equation}
for every $\bu\in\bbH$, and some constant $d>0$. Then $E$ is an error functional (according to Definition \ref{errorgeneral}), and, consequently, there is a unique global minimizer $\bu_\infty$ with $E(\bu_\infty)=0$, and \eqref{medidaerror} holds
\begin{equation}\label{medidaerrorbis}
\|\bu-\bu_\infty\|^2\le CE(\bu),
\end{equation}
for some constant $C$, and every $\bu\in\bbH$. 
\end{proposition}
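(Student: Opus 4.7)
The plan is to verify that $E$ fits the definition of an error functional (Definition \ref{errorgeneral}) and then invoke Proposition \ref{basicas}. Enhanced coercivity \eqref{enhcoer} is already part of the hypotheses, so the only remaining property to establish is the asymptotic behavior \eqref{comporinff}, namely that $E(\bu)\to0$ along any bounded sequence with $E'(\bu)\to\bcero$.

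The key observation is the identity \eqref{propoper} itself. Rewriting it as
$$
dE(\bu)=-\langle E'(\bu),\T\bu\rangle,
$$
and applying Cauchy--Schwarz in $\bbH$, we immediately obtain
$$
dE(\bu)\le \|E'(\bu)\|\,\|\T\bu\|.
$$
Since $\T$ is locally Lipschitz, it is bounded on bounded subsets: for any bounded set $B\subset\bbH$, the bound $\|\T\bu\|\le \|\T\bcero\|+L_B\|\bu\|$, where $L_B$ is the Lipschitz constant of $\T$ on a ball containing $B$, yields $\sup_{\bu\in B}\|\T\bu\|<\infty$. Thus, for $\bu$ restricted to such a bounded set, $E(\bu)\le (M_B/d)\,\|E'(\bu)\|$ with $M_B=\sup_{\bu\in B}\|\T\bu\|$, which directly gives \eqref{comporinff}.

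With both ingredients \eqref{enhcoer} and \eqref{comporinff} in hand, $E$ is an error functional, so Proposition \ref{basicas} applies and supplies the unique global minimizer $\bu_\infty$ together with the estimate \eqref{medidaerrorbis}. I do not expect any genuine obstacle here: the Cauchy--Schwarz step is immediate, and the bounded-on-bounded-sets property of a locally Lipschitz map is standard. The only mild subtlety worth flagging explicitly is that local Lipschitz continuity is precisely what one needs; without it, $\|\T\bu\|$ could fail to be controlled on bounded sets, and the conclusion $E(\bu)\to0$ would fail even when the identity \eqref{propoper} holds.
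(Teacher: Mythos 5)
Your argument is exactly the paper's intended one: the paper proves this proposition only by remarking that \eqref{propoper} ``leads immediately'' to \eqref{comporinff}, and you have filled in precisely that step — Cauchy--Schwarz on $dE(\bu)=-\langle E'(\bu),\T\bu\rangle$ plus control of $\|\T\bu\|$ on bounded sets — before invoking Proposition \ref{basicas}, so the approach is essentially identical. The only caveat worth noting is that your bound $\|\T\bu\|\le\|\T\bcero\|+L_B\|\bu\|$ tacitly upgrades ``locally Lipschitz'' to ``Lipschitz on a ball containing $B$'', which is not automatic in an infinite-dimensional $\bbH$, but this reading is the one the paper itself implicitly relies on.
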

Note how condition \eqref{propoper} leads immediately to \eqref{comporinff}. 
In this contribution, we will assume smoothness in all of our examples. 

Typically our Hilbert spaces $\bbH$ will be usual Sobolev spaces in different situations, so standard facts about these spaces will be taken for granted. In particular, the following Hilbert spaces will play a basic role for us in those various situations mentioned above
$$
H^1(0, T; \R^N),\quad H^1_0(\R^N_+),\quad H^1(\R^N_+),\quad H^1_0(\Omega), \quad H^1_0(\Omega; \R^N),
$$
for a domain $\Omega\subset\R^N$ as regular as we may need it to be. 

If one is interested in numerical or practical approximation of solutions $\bu_\infty$, note how \eqref{medidaerrorbis} is a clear invitation to seek approximations to $\bu_\infty$ by minimizing $E(\bu)$. The standard way to take a functional to its minimum value is to use a steepest descent algorithm or some suitable variant of it. It is true that such procedure is designed, in fact, to lead the derivative $E'(\bu)$ to zero; but precisely, condition \eqref{comporinff} is guaranteeing that in doing so we are also converging to $\bu_\infty$ always. We are not pursuing this direction here, though it has been implemented in some scenarios (\cite{munped2}, \cite{pedregal3}). 

Definition \ref{errorgeneral} is global. A local parallel concept may turn out necessary for some situations. We will show this in our final example dealing with the steady Navier-Stokes system. The application to parabolic problems, though still feasible, is, in general, more delicate. 

\section{Cauchy problems for ODEs}
As a preliminary step, we start testing our ideas with a typical  initial-value, Cauchy problem for the non-linear system
\begin{equation}\label{ode}
\bx'(t)=\bbf(\bx(t))\hbox{ in }(0, +\infty),\quad \bx(0)=\bx_0
\end{equation}
where the map 
$$
\bbf(\by):\R^N\to\R^N
$$ 
is smooth and globally Lipschitz, and $\bx_0\in\R^N$. Under these circumstances, it is well-known that \eqref{ode} possesses a unique solution.
Let us pretend not to know anything about problem \eqref{ode}, and see if our formalism could be applied in this initial situation to prove the following classical theorem.

\begin{theorem}
If the mapping $\bbf(\by)$ is globally Lipschitz, there is unique absolutely continuous solution 
$$
\bx(t):[0, \infty)\to\R^N
$$ 
for \eqref{ode}.
\end{theorem}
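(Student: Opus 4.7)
The plan is to apply Proposition \ref{principalbis} on each finite interval $[0,T]$ and then glue the resulting solutions. Fix $T>0$ and work on the affine subspace
$$
\bbH_T=\{\bx\in H^1(0,T;\R^N):\bx(0)=\bx_0\},
$$
which becomes a Hilbert space after the shift $\bz=\bx-\bx_0$. The natural least-squares error functional is
$$
E(\bx)=\frac{1}{2}\int_0^T|\bx'(t)-\bbf(\bx(t))|^2\,dt,
$$
whose zeros are exactly the classical solutions of \eqref{ode} on $[0,T]$.

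The first task is the enhanced coercivity \eqref{enhcoer}. For $\bx,\by\in\bbH_T$ I would write
$$
\bx'(t)-\by'(t)=\bigl[\bx'(t)-\bbf(\bx(t))\bigr]-\bigl[\by'(t)-\bbf(\by(t))\bigr]+\bigl[\bbf(\bx(t))-\bbf(\by(t))\bigr],
$$
integrate from $0$ to $t$, use $\bx(0)=\by(0)=\bx_0$, apply Cauchy--Schwarz to the two residual integrals and the global Lipschitz bound $|\bbf(\bx)-\bbf(\by)|\le L|\bx-\by|$ to the remaining term. Gronwall's inequality then gives a pointwise estimate of $|\bx(t)-\by(t)|$ by $C_T(\sqrt{E(\bx)}+\sqrt{E(\by)})$, which integrates to an $L^2$ control; the control on $\bx'-\by'$ in $L^2$ follows from the same decomposition. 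Squaring and using $(a+b)^2\le 2(a^2+b^2)$ produces \eqref{enhcoer} with a constant depending on $T$ and $L$.

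For the condition \eqref{comporinff}, I would invoke Proposition \ref{principalbis} by constructing an explicit operator $\T$. Since
$$
\langle E'(\bx),\bw\rangle=\int_0^T\bigl[\bx'(t)-\bbf(\bx(t))\bigr]\cdot\bigl[\bw'(t)-\nabla\bbf(\bx(t))\bw(t)\bigr]\,dt,
$$
one defines $\T\bx=\bw$ where $\bw$ is the unique solution of the linear Cauchy problem
$$
\bw'(t)-\nabla\bbf(\bx(t))\bw(t)=-\bigl[\bx'(t)-\bbf(\bx(t))\bigr],\qquad \bw(0)=\bcero.
$$
Because $|\nabla\bbf|\le L$ uniformly, standard linear ODE estimates give $\bw\in H^1(0,T;\R^N)$ with $\|\bw\|_{H^1}$ controlled by $\|\bx'-\bbf(\bx)\|_{L^2}$, and continuous dependence on the coefficients makes $\T$ locally Lipschitz from $\bbH_T$ into itself. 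By construction $\langle E'(\bx),\T\bx\rangle=-2E(\bx)$, so Proposition \ref{principalbis} delivers a unique $\bx_T\in\bbH_T$ with $E(\bx_T)=0$, i.e.\ a unique absolutely continuous solution of \eqref{ode} on $[0,T]$.

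To conclude, since the construction works for every $T>0$ and uniqueness forces $\bx_{T_2}$ to restrict to $\bx_{T_1}$ whenever $T_1<T_2$, gluing yields a unique absolutely continuous solution on $[0,\infty)$. The main technical obstacle I anticipate is the enhanced coercivity step, where Gronwall is essential and the interval length $T$ enters the constants; this is also the reason to work first on $[0,T]$ and then pass to $[0,\infty)$ by uniqueness rather than trying to set up a single weighted functional on the half line.
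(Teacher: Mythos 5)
Your proposal follows essentially the same route as the paper: the same least-squares functional (after the shift $\bz=\bx-\bx_0$), the same decomposition-plus-Gronwall argument for the enhanced coercivity \eqref{enhcoer}, and the same linearized operator $\T$ with $\langle E'(\bx),\T\bx\rangle=-2E(\bx)$ feeding Proposition \ref{principalbis}. The only cosmetic differences are that you make the gluing over all $T>0$ explicit and that, strictly speaking, $\T$ maps into the shifted linear space (its images satisfy $\bw(0)=\bcero$) rather than into the affine set $\bbH_T$, which is exactly the identification your initial shift already provides.
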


According to our previous discussion, we need a functional $E:\bbH\to\R^+$ defined on an appropriate Hilbert space $\bbH$ complying with the necessary properties. 

For a fixed, but otherwise arbitrary, positive time $T$, we will take 
\begin{gather}
\bbH=\{\bz(t):[0, T]\to\R^N: \bz\in H^1(0, T; \R^N), \bz(0)=\bcero\},\nonumber\\
E(\bz)=\frac12\int_0^T|\bz'(s)-\bbf(\bx_0+\bz(s))|^2\,ds\label{erroredo}.
\end{gather}
$\bbH$ is a subspace of the standard Sobolev space $H^1(0, T; \R^N)$, under the norm (recall that $\bz(0)=\bcero$ for paths in $\bbH$)
$$
\|\bz\|^2=\int_0^T|\bz'(s)|^2\,ds.
$$
Note that paths $\bx\in\bbH$ are absolutely continuous, and hence $E(\bz)$ is well-defined over $\bbH$. 
We first focus on \eqref{enhcoer}.
\begin{lemma}
For paths $\by$, $\bz$ in $\bbH$, we have
$$
\|\by-\bz\|^2\le C(E(\by)+E(\bz)),\quad C>0.
$$
\end{lemma}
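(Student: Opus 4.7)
The plan is to write $\by - \bz$ as a controllable difference using the Lipschitz property of $\bbf$ plus a Grönwall-type bound, since the naive inequality $\|\by-\bz\|^2 \le C(E(\by)+E(\bz)) + C'\|\by-\bz\|^2$ (that one gets by splitting and using Poincaré) would only work for sufficiently small $T$, and we need it for every fixed $T$.

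Concretely, let $L$ be the global Lipschitz constant of $\bbf$, set $\bu(s) = \by(s)-\bz(s)$ (so $\bu(0)=\bcero$), and define the ``residual difference''
$$
\bbg(s) = \bigl(\by'(s)-\bbf(\bx_0+\by(s))\bigr) - \bigl(\bz'(s)-\bbf(\bx_0+\bz(s))\bigr),
$$
so that $\bu'(s) = \bbg(s) + \bbf(\bx_0+\by(s)) - \bbf(\bx_0+\bz(s))$ and therefore
$$
|\bu'(s)| \le |\bbg(s)| + L|\bu(s)|.
$$
Integrating from $0$ to $s$ and applying the integral form of Grönwall's inequality to $|\bu(s)| \le \int_0^s|\bbg| + L\int_0^s|\bu|$ gives the pointwise bound $|\bu(s)| \le e^{Ls}\int_0^s |\bbg(\tau)|\,d\tau$. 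Cauchy--Schwarz then turns this into
$$
|\bu(s)|^2 \le T\,e^{2LT}\int_0^T|\bbg|^2\,d\tau.
$$

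The final ingredient is the elementary estimate $|\bbg|^2 \le 2|\by'-\bbf(\bx_0+\by)|^2 + 2|\bz'-\bbf(\bx_0+\bz)|^2$, which upon integration gives $\int_0^T|\bbg|^2 \le 4(E(\by)+E(\bz))$. Plugging this back into the bounds for $|\bu'|^2 \le 2|\bbg|^2 + 2L^2|\bu|^2$ and integrating yields
$$
\|\by-\bz\|^2 = \int_0^T|\bu'|^2\,ds \le \bigl(8 + 8L^2T^2 e^{2LT}\bigr)\bigl(E(\by)+E(\bz)\bigr),
$$
which is the desired inequality with $C = 8(1 + L^2T^2e^{2LT})$.

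The only delicate step is recognizing that the direct splitting plus Poincaré produces a constant of the form $3L^2T^2$ multiplying $\|\by-\bz\|^2$ on the right, which cannot be absorbed unless $T$ is small; the Grönwall step is what breaks this obstruction and delivers a bound valid for every fixed $T$, at the price of an exponential dependence on $LT$ in the constant $C$.
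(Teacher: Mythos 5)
Your proposal is correct and follows essentially the same strategy as the paper: split $\by'-\bz'$ into the two residuals plus the Lipschitz increment $\bbf(\bx_0+\by)-\bbf(\bx_0+\bz)$, use Gr\"onwall to get a pointwise (hence $L^2$) bound on $\by-\bz$ in terms of $E(\by)+E(\bz)$, and then feed that back into the decomposition to control $\int_0^T|\by'-\bz'|^2\,ds$, which is the $\bbH$-norm. The only differences are cosmetic (Gr\"onwall applied to $|\bu|$ rather than $|\bu|^2$, and explicit constants), so there is nothing to fix.
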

\begin{proof}
The proof is, in fact, pretty elementary. Suppose that 
$$
\bz(0)=\by(0)=\bx_0,
$$ 
so that $\by-\bz\in\bbH$. Then
\begin{align}
\by(t)-\bz(t)=&\int_0^t (\by'(s)-\bz'(s))\,ds\nonumber\\
=&\int_0^t (\by'(s)-\bbf(\by(s)))\,ds+\int_0^t (\bbf(\by(s))-\bbf(\bz(s)))\,ds\nonumber\\
&+\int_0^t (\bbf(\bz(s))-\bz'(s))\,ds.\nonumber
\end{align}
From here, we immediately find
$$
|\by(t)-\bz(t)|^2\le C(E(\by)+E(\bz))+CM^2\int_0^t|\by(s)-\bz(s)|^2\,ds,
$$
if $M$ is the Lipschitz constant for the map $\bbf$, and $C$ is a generic, universal constant we will not care to change. From Gronwall's lemma, we can have
\begin{equation}\label{dercero}
|\by(t)-\bz(t)|^2\le C(E(\by)+E(\bz))e^{CM^2T}
\end{equation}
for all $t\in[0, T]$. This means
\begin{gather}
\|\by-\bz\|_{L^\infty(0, T; \R^N)}\le e^{CM^2T/2}\sqrt{C(E(\by)+E(\bz))},\nonumber\\
\|\by-\bz\|_{L^2(0, T; \R^N)}^2\le CTe^{CM^2T}(E(\by)+E(\bz)).\label{cota}
\end{gather}
But once we can rely on this information,  the above decomposition allows us to write in a similar manner
$$
\int_0^t |\by'(s)-\bz'(s)|^2\,ds\le C(E(\by)+E(\bz))+CM^2\int_0^t|\by(s)-\bz(s)|^2\,ds
$$
and
$$
\|\by'-\bz'\|^2_{L^2(0, T; \R^N)}\le C(E(\by)+E(\bz))+CM^2\|\by-\bz\|^2_{L^2(0, T; \R^N)},
$$
and thus, taking into account \eqref{cota}, 
$$
\|\by'-\bz'\|^2_{L^2(0, T; \R^N)}\le (C+C^2M^2Te^{CM^2T})(E(\by)+E(\bz)).
$$
Our estimate \eqref{enhcoer} is then a consequence that the norm in $\bbH$ can be taken to be the $L^2$-norm of the derivative. 
\end{proof}

The second basic ingredient is \eqref{comporinff}. We will be using Proposition \ref{principalbis}. We assume further that the mapping $\bbf$ is smooth with a derivative uniformly bounded to guarantee the uniform Lipschitz condition. 
For the operator $\T$, we will put $\bZ=\T\bz$ for $\bz\in\bbH$, and linearize \eqref{ode} at the path $\bx_0+\bz(t)$ to write
\begin{gather}
\bZ'(t)=\bbf(\bx_0+\bz(t))+\nabla\bbf(\bx_0+\bz(t))\bZ(t)-\bz'(t)\hbox{ in }[0, T],\label{linealizacion}\\ 
\bZ(0)=\bcero.\nonumber
\end{gather}
This is a linear, differential, non-constant coefficient system for $\bZ$ with coefficients depending on $\bz$. 
Under smoothness assumptions, which we take for granted, such operator $\T$ is locally Lipschitz because the image $\bZ=\T\bz$ is defined through a linear initial-value, Cauchy problem with coefficients depending continuously on $\bz$. 

The important property to be checked, concerning $\T$, is \eqref{propoper}. It is elementary to see, under smoothness assumptions which, as indicated, we have taken for granted, that
$$
\langle E'(\bz), \bZ\rangle=
\int_0^T(\bz'(s)-\bbf(\bx_0+\bz(s)))(\bZ'(s)-\nabla\bbf(\bx_0+\bz(s))\bZ(s))\,ds
$$
Hence, for $\bZ=\T\bz$ coming from \eqref{linealizacion}, we immediately deduce that
$$
\langle E'(\bz), \bZ\rangle=-2E(\bz).
$$
We are, then, entitled to apply Proposition \ref{principalbis} to conclude that functional $E$ in 
\eqref{erroredo} is an error functional after Definition \ref{errorgeneral}, and we are entitled to utilize Proposition \ref{basicas} to conclude the following.
\begin{theorem}
If the mapping $\bbf(\by):\R^N\to\R^N$ is $\C^1$- with a globally bounded gradient, then, for arbitrary $\bx_0\in\R^N$ and $T>0$, problem \eqref{ode} admits a unique $\C^1$-solution 
$$
\overline\bx(t):[0, T)\to\R^N,
$$ 
and there is a positive constant $C$ such that
$$
\|\bx-\overline\bx\|_\bbH^2\le C\int_0^T|\bx'(s)-\bbf(\bx_0+\bx(s))|^2\,ds
$$
for every $\bx\in\bbH$.
\end{theorem}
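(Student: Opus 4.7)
The plan is to apply Proposition \ref{principalbis} to the functional $E$ in \eqref{erroredo}, viewed on the Hilbert space
\[
\bbH=\{\bz\in H^1(0,T;\R^N):\bz(0)=\bcero\}.
\]
The enhanced coercivity \eqref{enhcoer} is exactly the content of the preceding lemma, so the only remaining point is to exhibit a locally Lipschitz operator $\T:\bbH\to\bbH$ satisfying $\langle E'(\bz),\T\bz\rangle=-dE(\bz)$ for some $d>0$.

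First I would define $\bZ=\T\bz$ as the unique solution of the linear non-autonomous Cauchy problem \eqref{linealizacion}. Because $\nabla\bbf$ is globally bounded and depends on $\bz$ only through evaluation along $\bx_0+\bz$, standard Gronwall-type estimates applied to the difference $\bZ_1-\bZ_2$ of two such solutions yield local Lipschitz continuity of $\T$ in $\bbH$, and in particular show that $\bZ\in H^1(0,T;\R^N)$ with $\bZ(0)=\bcero$, so $\T$ truly maps $\bbH$ into itself. Next I would compute the Gateaux derivative of $E$ at $\bz$ in an arbitrary direction $\bW\in\bbH$, obtaining
\[
\langle E'(\bz),\bW\rangle=\int_0^T(\bz'(s)-\bbf(\bx_0+\bz(s)))\cdot(\bW'(s)-\nabla\bbf(\bx_0+\bz(s))\bW(s))\,ds,
\]
and then substitute $\bW=\bZ=\T\bz$. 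The defining equation \eqref{linealizacion} rearranges precisely to
\[
\bZ'(s)-\nabla\bbf(\bx_0+\bz(s))\bZ(s)=\bbf(\bx_0+\bz(s))-\bz'(s),
\]
so the integrand collapses to $-|\bz'(s)-\bbf(\bx_0+\bz(s))|^2$, yielding $\langle E'(\bz),\T\bz\rangle=-2E(\bz)$, which is \eqref{propoper} with $d=2$.

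With both hypotheses of Proposition \ref{principalbis} confirmed, Proposition \ref{basicas} supplies a unique $\overline\bz\in\bbH$ with $E(\overline\bz)=0$ together with the estimate $\|\bz-\overline\bz\|^2\le CE(\bz)$. Setting $\overline\bx(t)=\bx_0+\overline\bz(t)$ translates vanishing of $E$ into $\overline\bx'(s)=\bbf(\overline\bx(s))$ for a.e.\ $s\in[0,T]$, and a one-line bootstrap using continuity of $\overline\bx$ together with the $\C^1$-regularity of $\bbf$ upgrades $\overline\bx$ to a genuine $\C^1$-solution. The stated error bound is merely \eqref{medidaerrorbis} rewritten in the variable $\bx=\bx_0+\bz$; since $T>0$ was arbitrary, uniqueness on each $[0,T]$ patches the solutions together into a solution on $[0,\infty)$. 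The step I would spend most care on is verifying that $\T$ is well-defined as an operator into $\bbH$ and locally Lipschitz there, since everything else reduces to the algebraic identity above and the general machinery already established.
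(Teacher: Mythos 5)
Your proposal is correct and follows essentially the same route as the paper: the same linearization operator $\T$ from \eqref{linealizacion}, the same computation giving $\langle E'(\bz),\T\bz\rangle=-2E(\bz)$, and the same appeal to Propositions \ref{principalbis} and \ref{basicas}, with the enhanced coercivity supplied by the preceding lemma. The extra remarks on the $\C^1$ bootstrap and patching over arbitrary $T$ are consistent with, and slightly more explicit than, the paper's own conclusion.
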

There is no difficulty in showing a local version of this result by using the same ideas. 

\section{Linear hyperbolic example}
Since most likely readers will not be used to think about hyperbolic problems in these terms, we will treat the most transparent example of a linear, hyperbolic problem from this perspective, and later apply the method to a non-linear wave equation.

 We seek a (weak) solution $u(t, \bx)$ of some sort of the problem
\begin{gather}
u_{tt}(t, \bx)-\Delta u(t, \bx)-u(t, \bx)=f(t, \bx)\hbox{ in }\R^N_+,\label{ondas}\\ u(0, \bx)=0, u_t(0, \bx)=0\hbox{ on }t=0,\nonumber
\end{gather}
for $f\in L^2(\R^N_+)$. Here $\R^N_+$ is the upper half hyperspace $[0, +\infty)\times\R^N$. We look for 
$$
u(t, \bx)\in H^1_0(\R^N_+)
$$ 
(jointly in time and space) such that
\begin{gather}
\int_{\R^N_+}[u_t(t, \bx)w_t(t, \bx)-\nabla u(t, \bx)\cdot\nabla w(t, \bx)\label{formadeb}\\
+(f(t, \bx)+u(t, \bx))w(t, \bx)]\,d\bx\,dt=0\nonumber
\end{gather}
for every test function
$$
w(t, \bx)\in H^1(\R^N_+).
$$ 
Note how the arbitrary values of the test function $w$ for $t=0$ imposes the vanishing initial velocity $u_t(0, \bx)=0$. 

To setup a suitable error functional 
$$
E(u):  H^1_0(\R^N_+)\to\R^+
$$
for every $u(t, \bx)$, 
and not just for the solution we seek, we utilize a natural least-square concept as indicated in the Introduction. Define an appropriate defect or residual function 
$$
U(t, \bx)\in H^1(\R^N_+),
$$ 
for each such $u\in H^1_0(\R^N_+)$, as the unique variational solution of
\begin{equation}\label{debilondas}
\int_{\R^N_+}[(u_t+U_t)w_t-(\nabla u-\nabla U)\cdot\nabla w+(f+u+U) w]\,d\bx\,dt=0
\end{equation}
valid for every $w\in H^1(\R^N_+)$. This function $U$ is indeed the unique minimizer over $H^1(\R^N_+)$ of the strictly convex, quadratic functional
$$
I(w)=\int_{\R^N_+}\left(\frac12[(w_t+u_t)^2+|\nabla w-\nabla u|^2+(u+w)^2]+fw\right)\,d\bx\,dt
$$
for each fixed $u$. 
The size of $U$ is regarded as a measure of the departure of $u$ from being a solution of our problem
\begin{gather}
E:H^1_0(\R^N_+)\to\R^+,\nonumber\\ 
E(u)=\int_{\R^N_+}\frac12[U^2_t(t, \bx)+|\nabla U(t, \bx)|^2+U^2(t, \bx)]\,d\bx\,dt.\label{errorhyp}
\end{gather}
We can also put, in a short form, 
\begin{equation}\label{sencillo}
E(u)=\frac12\|U\|_{H^1(\R^N_+)}^2;
\end{equation}
or even
$$
E(u)=\frac12\|u_{tt}-\Delta u-u-f\|^2_{H^{-1}(\R^N_+)},
$$
though we will stick to \eqref{sencillo} to better manipulate $E$. 

We would like to apply Proposition \ref{basicas} in this situation, and hence, we set to ourselves the task of checking the two main assumptions in Definition \ref{errorgeneral}.
Our functional $E$ is definitely smooth and non-negative to begin with. 

It is not surprising that in order to work with the wave equation the following two linear operators 
\begin{gather}
\bbS:H^1(\R^N_+)\mapsto H^1(\R^N_+),\quad \bbS w(t, \bx)=w(t, -\bx),\nonumber\\
\cS: H^1(\R^N_+)\mapsto H^1(\R^N_+)^*,\nonumber\\
\cS u(t, \bx)=(u(t, -\bx), u_t(t, -\bx), \nabla u(t, -\bx)),\nonumber
\end{gather}
will play a role. $H^1(\R^N_+)^*$ is here the dual space of $H^1(\R^N_+)$, not to be mistaken with $H^{-1}(\R^N_+)$. 
Put $\bbH=\cS(H^1_0(\R^N_+))$. The following fact is elementary. Check for instance \cite{brezis}. 

\begin{lemma}\label{hiperbolico}
\begin{enumerate}
\item The map $\bbS$ is an isometry.
\item $\bbH$ is a closed subspace of $H^1(\R^N_+)^*$, and 
$$
\cS:H^1_0(\R^N_+)\to\bbH
$$ 
is a bijective, continuous mapping. In fact, we clearly have
\begin{equation}\label{coer}
\|u\|_{H^1_0(\R^N_+)}\le \|\cS u\|_{H^1(\R^N_+)^*}.
\end{equation}
\end{enumerate}
\end{lemma}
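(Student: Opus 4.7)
The plan is to handle both parts through elementary changes of variables, with the nontrivial content lying in the interplay between the spatial reflection $\bbS$ and the dual pairing that identifies $\cS u$ with an element of $H^1(\R^N_+)^*$. For Part (1), the map $\bbS$ acts only on the spatial variables $\bx$ by reflection, so the substitution $\bx\mapsto-\bx$ has unit Jacobian and preserves $\R^N_+=[0,\infty)\times\R^N$. Since $\partial_t$ commutes with the reflection and $\nabla_\bx$ only contributes an overall sign that vanishes inside $|\cdot|^2$, one immediately obtains $\|\bbS w\|_{L^2}=\|w\|_{L^2}$, $\|(\bbS w)_t\|_{L^2}=\|w_t\|_{L^2}$, and $\|\nabla(\bbS w)\|_{L^2}=\|\nabla w\|_{L^2}$; summing yields the $H^1$-isometry.

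For Part (2), I will first make explicit the pairing by which $\cS u$ is regarded as a functional on $H^1(\R^N_+)$. The convention matched to the wave-equation bilinear form of \eqref{formadeb} is
\[ \langle\cS u,w\rangle=\int_{\R^N_+}\bigl[u(t,-\bx)\,w+u_t(t,-\bx)\,w_t-\nabla u(t,-\bx)\cdot\nabla w\bigr]\,d\bx\,dt. \]
Two sign flips now cooperate: by the chain rule $\nabla(\bbS u)(t,\bx)=-\nabla u(t,-\bx)$, and after the change of variables $\bx\to-\bx$ the right-hand side collapses to the standard $H^1(\R^N_+)$ inner product $\langle\bbS u,w\rangle_{H^1}$. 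In other words, $\cS u$ is the $H^1$-Riesz representation of $\bbS u$. Part (1) then delivers
\[ \|\cS u\|_{H^1(\R^N_+)^*}=\|\bbS u\|_{H^1(\R^N_+)}=\|u\|_{H^1(\R^N_+)}\ge\|u\|_{H^1_0(\R^N_+)}, \]
which is exactly \eqref{coer} and simultaneously forces $u=0$ whenever $\cS u=0$, establishing injectivity.

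Continuity of $\cS$ in both directions comes for free from the identity $\|\cS u\|_{H^1(\R^N_+)^*}=\|u\|_{H^1(\R^N_+)}$, and closedness of $\bbH$ is then automatic: if $\cS u_n$ is Cauchy in $H^1(\R^N_+)^*$, applying \eqref{coer} to differences makes $\{u_n\}$ Cauchy in $H^1_0(\R^N_+)$, so $u_n\to u$ and continuity identifies the limit as $\cS u\in\bbH$. I do not foresee any serious obstacle; the only delicate point is the sign bookkeeping in the identification of $\cS u$ as a functional, and once the wave-equation-compatible pairing is adopted both assertions of the lemma reduce to a single change of variables combined with the isometry of $\bbS$.
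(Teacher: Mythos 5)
Your Part (1) is fine: the reflection acts only in $\bx$, preserves $\R^N_+=[0,+\infty)\times\R^N$, and leaves the three $L^2$ quantities unchanged, so $\bbS$ is an isometry. The difficulty is in Part (2), and it is concentrated in the sentence ``$\cS u$ is the $H^1$-Riesz representation of $\bbS u$''. That identification holds only for the pairing you chose, in which the gradient slot of the triplet carries a minus sign, i.e.\ in which the third component of $\cS u$ is $\nabla[u(t,-\bx)]=-(\nabla u)(t,-\bx)=\nabla(\bbS u)$. But that is not the operator the rest of Section 3 uses. The convention is pinned down by the way $\cS$ is employed afterwards: testing \eqref{debilcero} with $\bbS w$ and changing variables produces, for the terms involving $u$, exactly $\int_{\R^N_+}[u(t,-\bx)w+u_t(t,-\bx)w_t+(\nabla u)(t,-\bx)\cdot\nabla w]\,d\bx\,dt$, with a \emph{plus} sign on the gradient term, and this is what the proof of Proposition \ref{enhcoerhyp} calls $\langle w,\cS u\rangle$; the same plus convention is required for the later formula $\langle E'(u),v\rangle=-\langle\bbS v,\cS U\rangle$. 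With your minus convention the key identity $\langle U,\bbS w\rangle+\langle w,\cS u\rangle=0$ fails (the gradient terms differ by $2\int(\nabla u)(t,-\bx)\cdot\nabla w$), and moreover your $\cS$ is just the Riesz map composed with $\bbS$, so $\bbH$ is a trivially isometric copy of $H^1_0(\R^N_+)$ and the lemma carries no information usable later.

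Under the paper's convention your chain $\|\cS u\|_{H^1(\R^N_+)^*}=\|\bbS u\|_{H^1}=\|u\|_{H^1}$ breaks down precisely because $(u(t,-\bx),u_t(t,-\bx),(\nabla u)(t,-\bx))$ is not of the form $(v,v_t,\nabla v)$ for any $v$ — this is the very reason the paper stresses that $H^1(\R^N_+)^*$ is not $H^{-1}(\R^N_+)$. Cauchy--Schwarz plus the measure-preserving reflection still give the upper bound $\|\cS u\|_{H^1(\R^N_+)^*}\le\|u\|_{H^1(\R^N_+)}$, hence continuity, but the lower bound \eqref{coer} — the only substantive claim, and the one your injectivity and closedness arguments lean on — does not follow from a change of variables. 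Undoing the reflection, \eqref{coer} asserts that $\|u\|_{H^1_0(\R^N_+)}$ is controlled by the $H^1(\R^N_+)^*$-norm of the functional $v\mapsto\int_{\R^N_+}[uv+u_tv_t-\nabla u\cdot\nabla v]\,d\bx\,dt$, i.e.\ an a priori estimate for $u\mapsto u_{tt}-\Delta u-u$ with zero initial data, uniformly on the unbounded time half-space. This needs genuine hyperbolic (energy/Gronwall) input and is delicate: for wave packets $u=\eta(t/T)\sin(\lambda t)\phi(\bx/R)\cos(\xi\cdot\bx)$ supported away from $t=0$ with $|\xi|^2=\lambda^2+1$, that functional has dual norm of order $(1/T+1/R)\|u\|_{H^1}$, so no such bound can emerge from formal sign bookkeeping alone. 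Note also that the paper offers no proof (it cites \cite{brezis} and calls the lemma elementary), so the gap is not a mismatch of methods: your argument proves a statement about a different operator, and for the operator actually used, the decisive inequality \eqref{coer} remains unproved.
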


We can now proceed to prove inequality \eqref{enhcoer} in this new context.
\begin{proposition}\label{enhcoerhyp}
There is a constant $K>0$ such that
$$
\|u-v\|_{H^1_0(\R^N_+)}^2\le K(E(u)+E(v)),
$$
for every pair $u, v\in H^1_0(\R^N_+)$.
\end{proposition}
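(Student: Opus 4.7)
The plan is to subtract the variational identity \eqref{debilondas} written for $v$ from the one written for $u$, and then exploit Lemma \ref{hiperbolico} to convert the resulting indefinite hyperbolic identity into a coercive bound. Let $\phi := u-v \in H^1_0(\R^N_+)$ and $\Phi := U-V \in H^1(\R^N_+)$. Subtracting, the source $f$ cancels and a trivial rearrangement yields
\begin{equation*}
\int_{\R^N_+}[\phi_t w_t - \nabla\phi\cdot\nabla w + \phi w]\,d\bx\,dt = -\int_{\R^N_+}[\Phi_t w_t + \nabla\Phi\cdot\nabla w + \Phi w]\,d\bx\,dt
\end{equation*}
for every test function $w \in H^1(\R^N_+)$.

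The right-hand side is the standard elliptic $H^1$-pairing of $\Phi$ against $w$, so Cauchy--Schwarz immediately produces the bound $\|\Phi\|_{H^1(\R^N_+)}\|w\|_{H^1(\R^N_+)}$. The central step is to recognize the indefinite left-hand side as a dual pairing of the form $\langle \cS\phi, \bbS w\rangle$. A change of variable $\bx \to -\bx$ inside the integral---legitimate because the spatial domain is the full $\R^N$---converts the hyperbolic form precisely into the pairing of $\cS\phi$ against $\bbS w$ predicted by the definition of $\cS$; the negative sign in front of $\nabla\phi\cdot\nabla w$ is exactly what is needed for this match, since reflection reverses the sign of the gradient. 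Because $\bbS$ is an isometry on $H^1(\R^N_+)$, $\bbS w$ ranges over the whole unit ball as $w$ does, and taking the supremum over $\|w\|_{H^1(\R^N_+)}\le 1$ gives $\|\cS\phi\|_{H^1(\R^N_+)^*} \le \|\Phi\|_{H^1(\R^N_+)}$.

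Applying the coercivity \eqref{coer} of Lemma \ref{hiperbolico} then yields $\|\phi\|_{H^1_0(\R^N_+)} \le \|\cS\phi\|_{H^1(\R^N_+)^*} \le \|\Phi\|_{H^1(\R^N_+)}$. Squaring and combining with the elementary triangle-type inequality $\|U-V\|^2_{H^1(\R^N_+)} \le 2\|U\|^2_{H^1(\R^N_+)} + 2\|V\|^2_{H^1(\R^N_+)} = 4(E(u)+E(v))$ yields the stated estimate with $K=4$. The main obstacle is the middle identification: the hyperbolic bilinear form on the left-hand side is indefinite, so naively testing with $w = \phi$ is useless. Lemma \ref{hiperbolico} really encodes the fact that, for functions with vanishing initial data, the spatial reflection $\bbS$ transforms this indefinite form into a functional that controls the full $H^1_0$-norm from below, and verifying this transformation carefully is the only non-routine computation.
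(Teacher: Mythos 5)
Your proof is correct and follows essentially the same route as the paper: subtract the two residual identities (equivalently, use linearity), recognize the indefinite hyperbolic form as the pairing of $\cS(u-v)$ against $\bbS w$ via the spatial reflection, bound it by $\|U-V\|_{H^1(\R^N_+)}$ through Cauchy--Schwarz, and invoke \eqref{coer} of Lemma \ref{hiperbolico} plus the triangle inequality. The only cosmetic difference is that the paper inserts $\bbS w$ as the test function and then changes variables, while you keep $w$ and reinterpret the left-hand side as $\langle \bbS w, \cS(u-v)\rangle$; since $\bbS$ is an isometric involution these are the same argument, and your explicit constant $K=4$ is fine.
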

\begin{proof}
Let $U, V\in H^1(\R^N)$ be the respective residual functions associated with $u$ and $v$. Because we are in a linear situation, if we replace
$$
u-v\mapsto u,\quad U-V\mapsto U,
$$
we would have
\begin{equation}\label{debilcero}
\int_{\R^N_+}[(u_t+U_t)w_t-(\nabla u-\nabla U)\cdot\nabla w+(u+U) w]\,d\bx\,dt=0,
\end{equation}
for every $w\in H^1(\R^N_+)$. 
If we use $\bbS w$ in \eqref{debilcero} instead of $w$, we immediately find
\begin{gather}
\int_{\R^N_+}[(u_t+U_t)w_t(t, -\bx)+(\nabla u-\nabla U)\cdot\nabla w(t, -\bx)\nonumber\\
+(u+U)w(t, -\bx)]\,d\bx\,dt=0.\nonumber
\end{gather}
The terms involving $U$ can be written in compact form as
$$
\langle U, \bbS w\rangle_{\langle H^1(\R^N_+), H^1(\R^N_+)\rangle}
$$
while a natural change of variables in the terms involving $u$ leads to writing these in the form
$$
\langle w, \cS u\rangle_{\langle H^1(\R^N_+), H^1(\R^N_+)^*\rangle}.
$$
Hence, for every $w\in H^1(\R^N_+)$, we find
$$
\langle U, \bbS w\rangle_{\langle H^1(\R^N_+), H^1(\R^N_+)\rangle}+
\langle w, \cS u\rangle_{\langle H^1(\R^N_+), H^1(\R^N_+)^*\rangle}=0.
$$
Bearing in mind this identity, we have, through the  Lemma \ref{hiperbolico}, 
\begin{align}
\|u\|_{H^1_0(\R^N_+)}\le&\|\cS u\|_{H^1(\R^N_+)^*}\nonumber\\
=&\sup_{\|w\|_{H^1(\R^N_+)}\le 1}\langle w, \cS u\rangle_{\langle H^1(\R^N_+), H^1(\R^N_+)^*\rangle}\nonumber\\
\le&\|U\|_{H^1(\R^N_+)}\,\|w\|_{H^1(\R^N_+)}
\nonumber\\
\le&\|U\|_{H^1(\R^N_+)}
.\nonumber
\end{align}
If we go back to 
$$
u\mapsto u-v,\quad U\mapsto U-V,
$$
we are led to
\begin{align}
\|u-v\|^2_{H^1_0(\R^N_+)}\le &\|U-V\|^2_{H^1(\R^N_+)}\nonumber\\
\le &C\left(\|U\|^2_{H^1(\R^N_+)}+\|V\|^2_{H^1(\R^N_+)}\right)\nonumber\\
\le &C\left(E(u)+E(v)\right),\nonumber
\end{align}
for some constant $C>0$. 
\end{proof}
The second main ingredient to apply Proposition \ref{basicas} is to show that 
$E$ defined in \eqref{errorhyp} complies with \eqref{comporinff} too. To this end, we need to compute the derivative $E'(u)$, and so 
we perform a perturbation
$$
u+\epsilon v\mapsto U+\epsilon V,
$$
in \eqref{debilondas} to write
\begin{gather}
\int_{\R^N_+}[(u_t+\epsilon v_t+U_t+\epsilon V_t)w_t-(\nabla u+\epsilon\nabla v-\nabla U-\epsilon\nabla V)\cdot\nabla w\nonumber\\
+(f+u+\epsilon v+U+\epsilon V) w]\,d\bx\,dt=0.\nonumber
\end{gather}
The term to order 1 in $\epsilon$ should vanish
\begin{equation}\label{firstorder}
\int_{\R^N_+}[(v_t+V_t)w_t-(\nabla v-\nabla V)\cdot\nabla w+(v+V) w]\,d\bx\,dt=0
\end{equation}
for every $w\in H^1(\R^N_+)$. On the other hand, by differentiating
$$
E(u+\epsilon v)=\int_{\R^N_+}\frac12((U+\epsilon V)^2_t+|\nabla U+\epsilon\nabla V|^2+(U+\epsilon V)^2)\,d\bx\,dt,
$$
with respect to $\epsilon$, and setting $\epsilon=0$, we arrive at
$$
\langle E'(u), v\rangle=\int_{\R^N_+}(U_tV_t+\nabla U\cdot\nabla V+UV)\,d\bx\,dt.
$$
By taking $w=U$ in \eqref{firstorder}, we can also write
\begin{align}
\langle E'(u), v\rangle=&\int_{\R^N_+}(\nabla v\cdot\nabla U-v_tU_t-vU)\,d\bx\,dt\nonumber\\
=&-\langle \bbS v, \cS U\rangle_{\langle H^1(\R^N_+), H^1(\R^N_+)^*\rangle}.\nonumber
\end{align}
From this identity, which ought to be valid for every $v\in H^1_0(\R^N_+)$, we clearly conclude that if 
$E'(u)\to\bcero$ then $\cS U\to\bcero$ as well, because $\bbS$ preserves the norm. Realizing that 
$$
E(u)=\frac12\|U\|_{H^1(\R^N_+)}^2\le \frac12\|\cS U\|_{H^1(\R^N_+)^*}^2,
$$
by estimate \eqref{coer}, we conclude the following.
\begin{proposition}
The functional $E$ in \eqref{errorhyp} is an error functional in the sense of Definition \ref{errorgeneral}.
\end{proposition}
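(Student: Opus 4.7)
The plan is to verify the two conditions of Definition \ref{errorgeneral} for the functional $E$ defined in \eqref{errorhyp}. Most of the work has already been done in the paragraphs preceding the statement, so what remains is mostly assembly.

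Condition \eqref{enhcoer}, the enhanced coercivity, is exactly Proposition \ref{enhcoerhyp}, so nothing new is needed for it. For condition \eqref{comporinff}, I would start from the identity just derived by perturbation analysis and the choice $w = U$ in \eqref{firstorder}, namely
$\langle E'(u), v\rangle = -\langle \bbS v, \cS U\rangle_{\langle H^1(\R^N_+), H^1(\R^N_+)^*\rangle},$
valid for every $v\in H^1_0(\R^N_+)$. Taking the supremum over $v$ in the unit ball of $H^1_0(\R^N_+)$ and exploiting that $\bbS$ is an isometry preserving $H^1_0(\R^N_+)$ (so that $\bbS v$ sweeps out that same unit ball), one obtains that $\|E'(u)\|_{H^1_0(\R^N_+)^*}$ controls $\|\cS U\|_{H^1(\R^N_+)^*}$ from above. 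The coercivity bound \eqref{coer} of Lemma \ref{hiperbolico} then gives $\|U\|_{H^1(\R^N_+)} \le \|\cS U\|_{H^1(\R^N_+)^*}$, so assembling the pieces,
$E(u) = \tfrac12 \|U\|^2_{H^1(\R^N_+)} \le \tfrac12 \|\cS U\|^2_{H^1(\R^N_+)^*} \le C\,\|E'(u)\|^2_{H^1_0(\R^N_+)^*},$
and hence \eqref{comporinff} holds (in fact globally, without even needing the restriction to bounded subsets).

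The only delicate bookkeeping point, and thus what I would flag as the main obstacle, is the application of the estimate \eqref{coer} to the residual $U$ rather than to an element of $H^1_0(\R^N_+)$ as originally formulated. Here I would rely on the observation that $\cS$ is, up to composition with the isometry $\bbS$, essentially the Riesz isomorphism of $H^1(\R^N_+)$ with its dual, and so the bound $\|w\|_{H^1(\R^N_+)} \le \|\cS w\|_{H^1(\R^N_+)^*}$ (indeed equality) extends to arbitrary $w\in H^1(\R^N_+)$, in particular to the residual $U$. With that point settled, both conditions of Definition \ref{errorgeneral} are in hand and the proposition follows at once.
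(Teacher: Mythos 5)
Your structure is the paper's: enhanced coercivity is quoted from Proposition \ref{enhcoerhyp}, and condition \eqref{comporinff} is extracted from the identity $\langle E'(u),v\rangle=-\langle\bbS v,\cS U\rangle$ followed by the bound \eqref{coer} applied to the residual $U$. The genuine problem is the justification you supply for the step you yourself single out as delicate. The operator $\cS$ is \emph{not}, up to composition with the isometry $\bbS$, the Riesz isomorphism of $H^1(\R^N_+)$ onto its dual: writing the pairing out, $\langle w,\cS u\rangle=\int_{\R^N_+}\left[u(t,-\bx)\,w+u_t(t,-\bx)\,w_t+(\nabla u)(t,-\bx)\cdot\nabla w\right]d\bx\,dt$, whereas the inner product $(w,\bbS u)_{H^1(\R^N_+)}$ carries the opposite sign on the gradient term, because $\nabla(\bbS u)(t,\bx)=-(\nabla u)(t,-\bx)$. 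That sign discrepancy is exactly the hyperbolic structure of the problem: the triple defining $\cS u$ is not the gradient triple of any $H^1$ function, so $\cS u$ is not a Riesz representative of $\bbS u$ (nor of anything else in general), and the asserted identity $\|w\|_{H^1(\R^N_+)}=\|\cS w\|_{H^1(\R^N_+)^*}$ is unfounded. Cauchy--Schwarz gives only the trivial direction $\|\cS w\|_{H^1(\R^N_+)^*}\le\|w\|_{H^1(\R^N_+)}$; the reverse inequality, which is the one you need, is precisely the nontrivial content of \eqref{coer} --- a uniqueness/inf--sup type statement tied to the wave operator --- and if it really followed from the Riesz theorem, Lemma \ref{hiperbolico} would be vacuous.

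To be clear about what is and is not your problem: invoking $\|U\|_{H^1(\R^N_+)}\le\|\cS U\|_{H^1(\R^N_+)^*}$ for the residual $U$, which lies in $H^1(\R^N_+)$ rather than in the class $H^1_0(\R^N_+)$ for which Lemma \ref{hiperbolico} states \eqref{coer}, is exactly what the paper does, with no more justification than the citation of \eqref{coer}; so you identified the right crux, but your patch has to be replaced, for instance by establishing the estimate of Lemma \ref{hiperbolico} on all of $H^1(\R^N_+)$ by whatever argument underlies it there, not by an appeal to duality formalism. A second, smaller point, which you share with the paper and which I would not count against you: the supremum of $|\langle\bbS v,\cS U\rangle|$ over $v$ in the unit ball of $H^1_0(\R^N_+)$ controls only the norm of the restriction of $\cS U$ to $H^1_0(\R^N_+)$, since $\bbS$ maps that ball onto itself, not the full dual norm $\|\cS U\|_{H^1(\R^N_+)^*}$; the paper's ``we clearly conclude'' makes the same jump.
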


Our main abstract result Proposition \ref{basicas} applies in this situation too, and we can conclude
\begin{theorem}
Problem \eqref{ondas} admits a unique weak solution $u\in H^1_0(\R^N_+)$ in the sense \eqref{formadeb},
and for every other $v\in H^1_0(\R^N_+)$, we have
$$
\|u-v\|_{H^1_0(\R^N_+)}^2\le KE(v),
$$
for some positive constant $K$. 
\end{theorem}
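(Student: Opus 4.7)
The plan is to recognize that the theorem is essentially an immediate corollary of the abstract framework, once the two non-trivial properties (enhanced coercivity and behavior at critical points) have been verified—which the excerpt has already done in Proposition \ref{enhcoerhyp} and in the unnamed proposition just preceding the theorem. So the proof reduces to an application of Proposition \ref{basicas} followed by a short identification step.

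First, I would invoke Proposition \ref{basicas} directly with $\bbH=H^1_0(\R^N_+)$ and with $E$ as defined in \eqref{errorhyp}. This yields the existence and uniqueness of a distinguished element $u_\infty\in H^1_0(\R^N_+)$ with $E(u_\infty)=0$, together with the quantitative bound $\|v-u_\infty\|_{H^1_0(\R^N_+)}^2\le CE(v)$ for every $v\in H^1_0(\R^N_+)$. Taking $K=C$, this is precisely the estimate in the statement, provided we can identify $u_\infty$ with the weak solution in the sense \eqref{formadeb}.

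The identification step is the second and only remaining item. Since $E(u)=\tfrac12\|U\|_{H^1(\R^N_+)}^2$ with $U$ the residual function determined by \eqref{debilondas}, the equality $E(u_\infty)=0$ forces $U_\infty\equiv 0$ in $H^1(\R^N_+)$. Substituting $U_\infty=0$ and all its derivatives into \eqref{debilondas} collapses it exactly to \eqref{formadeb} with $u=u_\infty$, so $u_\infty$ is a weak solution. Conversely, if $u$ satisfies \eqref{formadeb}, then $w=0$ is the (unique) minimizer of the strictly convex functional $I$ associated to $u$, hence $U\equiv 0$ and $E(u)=0$; so weak solutions of \eqref{ondas} correspond bijectively to zero-level points of $E$, and the uniqueness of the weak solution follows from the uniqueness of the minimizer supplied by Proposition \ref{basicas}.

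I do not anticipate any genuine obstacle here: the difficult ingredient—namely the duality/reflection argument behind the enhanced coercivity estimate via the operators $\bbS$ and $\cS$—has been dispatched in Proposition \ref{enhcoerhyp}, and the condition \eqref{comporinff} has been checked using $w=U$ as a test function in the linearized equation \eqref{firstorder}. The present theorem is therefore purely an assembly of those two facts with the abstract Proposition \ref{basicas}, plus the observation that the zero set of $E$ coincides with the set of weak solutions of \eqref{ondas}.
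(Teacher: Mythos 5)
Your proposal is correct and follows essentially the same route as the paper: the paper simply applies Proposition \ref{basicas} to the functional $E$ of \eqref{errorhyp}, the two hypotheses having been verified in Proposition \ref{enhcoerhyp} and the proposition on \eqref{comporinff}. Your additional explicit check that the zero set of $E$ coincides with the weak solutions of \eqref{ondas} (via $U\equiv 0$ in \eqref{debilondas} and uniqueness of the minimizer of the quadratic functional $I$) is exactly the identification the paper leaves implicit, so nothing is missing.
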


\section{Non-linear monotone problems}\label{seven}
Suppose we would like to solve, or approximate the solution of, a certain non-linear elliptic system of PDEs of the form
$$
\dv[\Phi(\nabla u)]=0\hbox{ in }\Omega,\quad u=u_0\hbox{ on }\partial\Omega,
$$
for a non-linear, smooth map
$$
\Phi(\ba):\R^N\to\R^N.
$$
$\Omega\subset\R^N$ is assumed to be a regular, bounded domain. 
One can set up a natural, suitable, non-negative, smooth functional based on the least-squares idea, as already introduced, 
\begin{equation}\label{funcellnl}
E(v):H^1_0(\Omega)\to\R
\end{equation}
by putting
$$
E(v)=\frac12\int_\Omega|\nabla U(\bx)|^2\,d\bx
$$
where
$$
\dv[\Phi(\nabla v+\nabla u_0)+\nabla U]=0\hbox{ in }\Omega
$$
and $U\in H^1_0(\Omega)$. We can also put
$$
E(v)=\frac12\|\dv[\Phi(\nabla v+\nabla u_0)]\|^2_{H^{-1}(\Omega)}.
$$
Our goal is to apply again Proposition \ref{basicas}, or, since we are now in a non-linear situation, Proposition \ref{principalbis}. Anyhow, \eqref{enhcoer} is necessary. 

\begin{lemma}
Let $\Phi(\ba):\R^N\to\R^N$ be a smooth-map with linear growth at infinity, i.e.
\begin{equation}\label{lineargr}
|\Phi(\ba)|\le C_1|\ba|+C_0,
\end{equation}
with $C_1>0$, and strictly monotone in the sense
\begin{equation}\label{monotonia}
(\Phi(\ba_1)-\Phi(\ba_0))\cdot(\ba_1-\ba_0)\ge c|\ba_1-\ba_0|^2,\quad c>0,
\end{equation}
for every pair of vectors $\ba_i$, $i=0, 1$. Then there is a positive constant $C$ such that
$$
\|u-v\|_{H^1_0(\Omega)}^2\le C(E(u)+E(v)),
$$
for every pair $u, v\in H^1_0(\Omega)$. 
\end{lemma}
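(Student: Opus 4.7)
The plan is to exploit strict monotonicity of $\Phi$ by testing a suitable subtracted weak formulation against $w = u - v$. Let $U,V\in H^1_0(\Omega)$ be the residual functions associated with $u$ and $v$, respectively, so that for every test function $w\in H^1_0(\Omega)$
$$
\int_\Omega \nabla U\cdot\nabla w\,d\bx = -\int_\Omega \Phi(\nabla u+\nabla u_0)\cdot\nabla w\,d\bx,
$$
and analogously for $V$ with $v$ in place of $u$. (The linear-growth hypothesis \eqref{lineargr} guarantees that $\Phi(\nabla u+\nabla u_0)\in L^2(\Omega;\R^N)$, hence $U$ and $V$ are well-defined via Lax--Milgram.)

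Subtracting the two defining identities yields, for every $w\in H^1_0(\Omega)$,
$$
\int_\Omega \nabla(U-V)\cdot\nabla w\,d\bx = -\int_\Omega \bigl[\Phi(\nabla u+\nabla u_0)-\Phi(\nabla v+\nabla u_0)\bigr]\cdot\nabla w\,d\bx.
$$
The natural choice is $w = u-v\in H^1_0(\Omega)$. Since $(\nabla u+\nabla u_0)-(\nabla v+\nabla u_0)=\nabla(u-v)$, the monotonicity assumption \eqref{monotonia} bounds the right-hand side below by $c\,\|u-v\|_{H^1_0(\Omega)}^2$, so
$$
c\,\|u-v\|_{H^1_0(\Omega)}^2 \le -\int_\Omega \nabla(U-V)\cdot\nabla(u-v)\,d\bx \le \|U-V\|_{H^1_0(\Omega)}\,\|u-v\|_{H^1_0(\Omega)}
$$
by Cauchy--Schwarz.

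Dividing by $\|u-v\|_{H^1_0(\Omega)}$ and squaring gives $c^2\|u-v\|_{H^1_0(\Omega)}^2\le \|U-V\|_{H^1_0(\Omega)}^2$. Using $\|U-V\|^2\le 2(\|U\|^2+\|V\|^2)=4(E(u)+E(v))$ completes the proof with $C=4/c^2$. The only slightly delicate point is confirming that the map $v\mapsto U$ is well-defined under only \eqref{lineargr} and \eqref{monotonia}; but since $U$ solves a linear Poisson problem whose forcing lies in $L^2(\Omega;\R^N)$, no further difficulty arises. In particular, strict monotonicity is used only at the single step of bounding the $\Phi$-difference from below, and no additional smoothness of $\Phi$ beyond what was stated is required.
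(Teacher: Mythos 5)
Your proof is correct and follows essentially the same route as the paper: test the (subtracted) residual identities with $u-v$, apply strict monotonicity, Cauchy--Schwarz, and the elementary bound $\|U-V\|^2\le 2(\|U\|^2+\|V\|^2)=4(E(u)+E(v))$. The only cosmetic difference is that the paper finishes with Poincar\'e's inequality to pass from the $L^2$-norm of $\nabla(u-v)$ to the full $H^1_0$-norm, a step you absorb by taking the gradient seminorm as the norm on $H^1_0(\Omega)$, which is equivalent.
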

\begin{proof}
Let $u, v\in H^1_0(\Omega)$, and let $U, V\in H^1_0(\Omega)$ be their respective residuals in the sense 
\begin{equation}\label{sistemas}
\dv[\Phi(\nabla u+\nabla u_0)+\nabla U]=0,\quad 
\dv[\Phi(\nabla v+\nabla u_0)+\nabla V]=0
\end{equation}
in $\Omega$, and
$$
E(u)=\frac12\|\nabla U\|^2_{L^2(\Omega; \R^N)},\quad E(v)=\frac12\|\nabla V\|^2_{L^2(\Omega; \R^N)}.
$$
If we use $u-v$ as test field in \eqref{sistemas}, we find
\begin{gather}
\int_\Omega\Phi(\nabla u+\nabla u_0)\cdot(\nabla u-\nabla v)\,d\bx=-\int_\Omega\nabla U\cdot(\nabla u-\nabla v)\,d\bx,\nonumber\\
 \int_\Omega\Phi(\nabla v+\nabla u_0)\cdot(\nabla u-\nabla v)\,d\bx=-\int_\Omega\nabla V\cdot(\nabla u-\nabla v)\,d\bx.\nonumber
\end{gather}
The monotonicity condition, together with these identities, takes us, by subtracting one from the other, to
$$
c\int_\Omega|\nabla u-\nabla v|^2\,d\bx\le\int_\Omega(\nabla V-\nabla U)\cdot(\nabla u-\nabla v)\,d\bx.
$$
The standard Cauchy-Schwarz inequality implies that
$$
c\|\nabla u-\nabla v\|_{L^2(\Omega; \R^N)}\le \|\nabla U-\nabla V\|_{L^2(\Omega; \R^N)},
$$
and thus, taking into account the triangular inequality, we have
$$
c^2\|\nabla u-\nabla v\|_{L^2(\Omega; \R^N)}^2\le 4E(u)+4E(v).
$$
The use of Poincaré's inequality yields our statement. 
\end{proof}
The second ingredient, to apply Theorem \ref{principalbis}, is the operator $\T$ which comes directly from linearization or from Newton's method. Given an approximation of the solution $v+u_0$, $v\in H^1_0(\Omega)$, we seek a better approximation $V\in H^1_0(\Omega)$ in the form
\begin{equation}\label{sistemalineal}
\dv[\Phi(\nabla v+\nabla u_0)+\nabla\Phi(\nabla v+\nabla u_0)\nabla V]=0\hbox{ in }\Omega,
\end{equation}
as a linear approximation of 
$$
\dv[\Phi(\nabla v+\nabla u_0+\nabla V)]=0\hbox{ in }\Omega.
$$
We therefore define
\begin{equation}\label{newoper}
\T:H^1_0(\Omega)\to H^1_0(\Omega),\quad \T v=V,
\end{equation}
where $V$ is the solution of \eqref{sistemalineal}. The fact that $\T$ is well-defined is a direct consequence of the standard Lax-Milgram lemma and the identification
$$
\bA=\nabla\Phi(\nabla v+\nabla u_0),\quad \ba=\Phi(\nabla v+\nabla u_0),
$$
provided
$$
|\nabla\Phi(\bv)|\le M,\quad 
 \bu\cdot\nabla\Phi(\bv)\bu\ge c|\bu|^2,\quad M, c>0.
 $$
The first bound is compatible with linear growth at infinity, \eqref{lineargr}, while the second one is a consequence of monotonicity \eqref{monotonia}. On the other hand, the smoothness of $\T$ depends directly on the smoothness of $\Phi$, specifically, we assume $\Phi$ to be $\C^2$. Since $\T$ comes from Newton's method, condition \eqref{propoper} is guaranteed. We are hence entitled to apply Proposition \ref{principalbis} and conclude that

\begin{theorem}
Let $\Phi(\ba):\R^N\to\R^N$ be a $\C^2$-mapping such that
\begin{gather}
|\nabla\Phi(\ba)|\le M,\nonumber\\
 (\Phi(\ba_1)-\Phi(\ba_0))\cdot(\ba_1-\ba_0)\ge c|\ba_1-\ba_0|^2,\nonumber
 \end{gather}
 for constants $M, c>0$, and every $\ba$, $\ba_1$, $\ba_0$ in $\R^N$. 
There is a unique weak solution $u\in u_0+H^1_0(\Omega)$, for arbitrary $u_0\in H^1(\Omega)$, of the equation
$$
\dv[\Phi(\nabla u)]=0\hbox{ in }\Omega,\quad u-u_0\in H^1_0(\Omega).
$$
Moreover
$$
\|u-v\|^2_{H^1_0(\Omega)}\le CE(v)
$$
for every other $v\in u_0+H^1_0(\Omega)$.
\end{theorem}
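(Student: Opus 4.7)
The plan is to cast this theorem as an application of Proposition \ref{principalbis} to the functional $E$ defined in \eqref{funcellnl}. The enhanced coercivity estimate \eqref{enhcoer} is already supplied by the preceding lemma, so the main work is to exhibit an operator $\T:H^1_0(\Omega)\to H^1_0(\Omega)$ satisfying \eqref{propoper}, and for this the natural choice is the Newton step $\T v=V$ defined by \eqref{sistemalineal}--\eqref{newoper}.

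First I would confirm that $\T$ is well-defined via the Lax--Milgram lemma: the bilinear form $(\xi,w)\mapsto \int_\Omega [\nabla\Phi(\nabla v+\nabla u_0)\nabla\xi]\cdot\nabla w\,d\bx$ is continuous by $|\nabla\Phi|\le M$, and coercive on $H^1_0(\Omega)$ by the pointwise bound $\bu\cdot\nabla\Phi(\ba)\bu\ge c|\bu|^2$, which is obtained by differentiating the monotonicity hypothesis along rays. The assumption $\Phi\in\C^2$, together with standard Lax--Milgram estimates in dependence on parameters, then yields local Lipschitz continuity of $v\mapsto V$.

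The heart of the proof is the identity $\langle E'(v),\T v\rangle=-2E(v)$. To compute $E'(v)$ I would differentiate the defining equation of the residual $U=U(v)$ implicitly in a direction $\xi\in H^1_0(\Omega)$, producing a variation $\delta U$ that satisfies
$$
\int_\Omega\bigl[\nabla\Phi(\nabla v+\nabla u_0)\nabla\xi+\nabla\delta U\bigr]\cdot\nabla w\,d\bx=0,\qquad w\in H^1_0(\Omega).
$$
Testing with $w=U$ and substituting into $\langle E'(v),\xi\rangle=\int_\Omega\nabla U\cdot\nabla\delta U\,d\bx$ gives
$$
\langle E'(v),\xi\rangle=-\int_\Omega\bigl[\nabla\Phi(\nabla v+\nabla u_0)\nabla\xi\bigr]\cdot\nabla U\,d\bx.
$$
Setting $\xi=V=\T v$ and testing \eqref{sistemalineal} once more with $w=U$ reduces the right-hand side to $\int_\Omega\Phi(\nabla v+\nabla u_0)\cdot\nabla U\,d\bx$, which by the very definition of $U$ (tested against itself) equals $-\int_\Omega|\nabla U|^2\,d\bx=-2E(v)$, delivering \eqref{propoper} with $d=2$.

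With the hypotheses of Proposition \ref{principalbis} now verified, we obtain a unique $v_\infty\in H^1_0(\Omega)$ with $E(v_\infty)=0$; by construction of $E$ this means $U(v_\infty)\equiv 0$, i.e. $\dv[\Phi(\nabla(v_\infty+u_0))]=0$ weakly, so $u=v_\infty+u_0$ is the sought weak solution and the estimate \eqref{medidaerrorbis} becomes the announced bound. The delicate point to monitor — which turns out not to be a genuine obstacle — is the possible lack of symmetry of $\nabla\Phi$: it is harmless because the computation of $\langle E'(v),\T v\rangle$ never requires transposing $\nabla\Phi$, since both $\delta U$ and $V$ are tested against the same function $U$ in their respective defining equations.
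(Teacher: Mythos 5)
Your proposal is correct and follows essentially the same route as the paper: the enhanced coercivity lemma plus the Newton-step operator $\T v=V$ from \eqref{sistemalineal}, well-defined via Lax--Milgram, and then Proposition \ref{principalbis}. In fact you supply the explicit computation of $\langle E'(v),\T v\rangle=-2E(v)$ (with the correct handling of the possibly non-symmetric $\nabla\Phi$), a verification the paper merely asserts with the phrase ``since $\T$ comes from Newton's method, condition \eqref{propoper} is guaranteed.''
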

It is not hard to design appropriate sets of assumptions to deal with more general equations of the form
$$
\dv[\Phi(\nabla v(\bx), v(\bx), \bx)]=0. 
$$

\section{Non-linear waves}
We would like to explore non-linear equations of the form
$$
u_{tt}(t, \bx)-\Delta u(t, \bx)-f(\nabla u(t, \bx), u_t(t, \bx), u(t, \bx))=0\hbox{ in }(t, \bx)\in\R^N_+,
$$
subjected to initial conditions 
$$
u(0, \bx)=u_0(\bx),\quad u_t(0, \bx)=u_1(\bx)
$$
for appropriate data $u_0$ and $u_1$ belonging to suitable spaces to be determined. 
Dimension $N$ is taken to be at least two. 
Though more complicated situations could be considered allowing for a monotone main part in the equation, as in the previous section, to better understand the effect of the term incorporating lower-order terms, we will restrict ourselves to the equation above.  Conditions on the non-linear term 
$$
f(\bz, z, u):\R^N\times\R\times\R\to\R
$$
will be specified along the way as needed.

Our ambient space will be $H^1(\R^N_+)$ so that weak solutions $u$ are sought in $H^1(\R^N_+)$. If we assume  
$$
u_0\in H^{1/2}(\R^N),\quad u_1\in L^2(\R^N),
$$ 
we can take for granted, without loss of generality, that both $u_0$ and $u_1$ identically vanish and $u\in H^1_0(\R^N_+)$, at the expense of permitting 
$$
f(\bz, z, u, t, \bx):\R^N\times\R\times\R\times\R\times\R^N\to\R.
$$
We will therefore stick to the problem
\begin{equation}\label{nonlinw}
u_{tt}-\Delta u-f(\nabla u, u_t, u, t, \bx)=0\hbox{ in }(t, \bx)\in\R^N_+,
\end{equation}
subjected to initial conditions 
\begin{equation}\label{iniccond}
u(0, \bx)=0,\quad u_t(0, \bx)=0.
\end{equation}

A weak solution $u\in H^1_0(\R^N_+)$ of \eqref{nonlinw} is such that
\begin{equation}\label{formadebilnlw}
\int_{\R^N_+}[-u_tw_t+\nabla u\cdot\nabla w-f(\nabla u, u_t, u, t, \bx)w]\,d\bx\,dt=0
\end{equation}
for every $w\in H^1(\R^N_+)$. This weak formulation asks for the non-linear term
recorded in the function $f$ to comply with
\begin{equation}\label{nonlinterm}
|f(\bz, z, u, t, \bx)|\le C(|\bz|+|z|+|u|^{(N+1)/(N-1)})+f_0(t, \bx)
\end{equation}
for a function $f_0\in L^2(\R^N_+)$, in such a way that the composition
$$
f(\nabla u, u_t, u, t, \bx)\in L^2(\R^N_+)
$$
for every $u\in H^1(\R^N_+)$. As expected, for every $u\in H^1_0(\R^N_+)$ we define its residual $U\in H^1(\R^N_+)$ through
\begin{equation}\label{resnlw}
\int_{\R^N_+}[(U_t+u_t)w_t-(\nabla u-\nabla U)\cdot\nabla w+(U+f(\nabla u, u_t, u, t, \bx))w]\,d\bx\,dt=0
\end{equation}
which ought to be correct for every test $w\in H^1(\R^N_+)$; and the functional
\begin{equation}\label{funcerrnlw}
E(u):H^1_0(\R^N_+)\to\R^+,\quad E(u)=\frac12\|U\|^2_{H^1(\R^N_+)},
\end{equation}
as a measure of departure of $u$ from being a weak solution of \eqref{nonlinw}. 
Note how \eqref{resnlw} determines $U$ in a unique way. Indeed, such $U$ is the unique minimizer of the strictly convex, quadratic functional
$$
\frac12\int_{\R^N_+}\left[(U_t+u_t)^2+|\nabla U-\nabla u|^2+(U+f(\nabla u, u_t, u, t, \bx))^2\right]\,d\bx\,dt
$$
define for $U\in H^1(\R^N_+)$. 
We claim that under appropriate additional hypotheses, we can apply Proposition \ref{principalbis} to this situation. To explain things in the most affordable way, however, we will show that Proposition \ref{basicas} can also be applied directly. This requires to check that $E$ in \eqref{funcerrnlw} is indeed an error functional in the sense of Definition \ref{errorgeneral}.

We will be using the operators and the formalism right before Lemma \ref{hiperbolico}, as well as bound \eqref{coer} in this lemma. 

\begin{lemma}
Suppose the function $f(\bz, z, u, t, \bx)$ is such that
\begin{enumerate}
\item $f(\bcero, 0, 0, t, \bx)\in L^2(\R^N_+)$;
\item the difference $f(\bz, z, u, t, \bx)-u$ is globally Lipschitz with respect to triplets $(\bz, z, u)$ in the sense
\begin{gather}
|f(\bz, z, u, t, \bx)-u-f(\by, y, v, t, \bx)+v|\le \nonumber\\
M\left(|\bz-\by|+|z-y|+\frac1D|u-v|^{(N+1)/(N-1)}\right),\nonumber
\end{gather}
where $D$ is the constant of the corresponding embedding 
$$
H^1(\R^N_+)\subset L^{2(N+1)/(N-1)}(\R^N_+),
$$ 
and $M<1$.
\end{enumerate}
Then there is a positive constant $K$ with
$$
\|u-v\|_{H^1_0(\R^N_+)}^2\le K(E(u)+E(v)),
$$
for every pair $u, v\in H^1_0(\R^N_+)$, where $E$ is given in \eqref{funcerrnlw}. 
\end{lemma}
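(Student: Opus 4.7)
The plan is to adapt the proof of Proposition \ref{enhcoerhyp} from the linear hyperbolic case, treating the nonlinearity as a Lipschitz perturbation of strength $M<1$ that can be absorbed back into the left-hand side. Split $f(\bz, z, u, t, \bx) = u + g(\bz, z, u, t, \bx)$; by the Lipschitz hypothesis, $g := f-u$ is globally Lipschitz with constant $M$ in the stated sense. Rewriting \eqref{resnlw},
$$
\int_{\R^N_+}[(U_t+u_t)w_t-(\nabla u-\nabla U)\cdot\nabla w+(U+u+g(\nabla u,u_t,u,t,\bx))w]\,d\bx\,dt=0,
$$
which is the linear hyperbolic residual equation \eqref{debilcero} plus a $g$-term. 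Subtracting the analogous identity for $(v,V)$ and setting $\tilde u=u-v$, $\tilde U=U-V$, and $R:=g(\nabla u, u_t, u, t, \bx)-g(\nabla v, v_t, v, t, \bx)$, the resulting weak identity differs from the linear case only by the extra term $\int_{\R^N_+} R\,w\,d\bx\,dt$.

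From here I would repeat the argument of Proposition \ref{enhcoerhyp} verbatim on the linear part: test with $\bbS w$, perform the change of variables $\bx\to -\bx$ in the terms involving $\tilde u$ (and in the $R$-term), recognize the pairings $\langle \cS \tilde u, w\rangle$ and $\langle \tilde U, \bbS w\rangle$, and invoke Lemma \ref{hiperbolico} together with the isometry property of $\bbS$. The upshot is
$$
\|\tilde u\|_{H^1_0(\R^N_+)}\le \|\tilde U\|_{H^1(\R^N_+)}+\sup_{\|w\|_{H^1(\R^N_+)}\le 1}\Big|\int_{\R^N_+}R(t,-\bx)\,w(t,\bx)\,d\bx\,dt\Big|,
$$
so the whole issue reduces to controlling the nonlinear residual $R$ in the appropriate dual norm.

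For that, the pointwise Lipschitz bound $|R|\le M(|\nabla\tilde u|+|\tilde u_t|+\tfrac{1}{D}|\tilde u|^{(N+1)/(N-1)})$, together with the Sobolev embedding $H^1(\R^N_+)\subset L^{2(N+1)/(N-1)}(\R^N_+)$ whose constant is precisely the $D$ in the hypothesis, yields
$$
\sup_{\|w\|_{H^1(\R^N_+)}\le 1}\Big|\int_{\R^N_+}R(t,-\bx)\,w(t,\bx)\,d\bx\,dt\Big|\le M\,\|\tilde u\|_{H^1_0(\R^N_+)}.
$$
Combining with the previous display gives $(1-M)\|\tilde u\|_{H^1_0}\le \|\tilde U\|_{H^1}$; squaring and using $\|\tilde U\|^2_{H^1}\le 2(\|U\|^2_{H^1}+\|V\|^2_{H^1})=4(E(u)+E(v))$ yields the claim with $K=4/(1-M)^2$.

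The main obstacle is the third-paragraph estimate on $R$: the super-linear pointwise term $|\tilde u|^{(N+1)/(N-1)}$ has to be converted into a linear bound on $\|\tilde u\|_{H^1_0}$, and the precise calibration $1/D$ in the hypothesis, with $D$ the Sobolev embedding constant, is exactly what makes this work when paired against $w\in H^1(\R^N_+)$. The rest is a routine perturbation of the linear hyperbolic argument, with the smallness $M<1$ ensuring that the perturbation is absorbable into the coercivity term.
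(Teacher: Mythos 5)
Your argument is essentially the paper's own proof: the paper likewise subtracts the two residual identities, treats the nonlinear difference as $f_u-f_v-s$ (your $R$ under the split $f=u+g$ is only a notational repackaging of this), recasts the identity with the operators $\bbS$, $\cS$ and Lemma \ref{hiperbolico} exactly as in Proposition \ref{enhcoerhyp}, bounds that term by $M\|s\|_{H^1_0(\R^N_+)}\|w\|_{H^1(\R^N_+)}$ via the Lipschitz hypothesis calibrated by the embedding constant $D$, and absorbs it using $M<1$ to reach $(1-M)\|s\|\le\|S\|$. There is no substantive difference in route, and your final constant $K=4/(1-M)^2$ is exactly what the paper's closing inequality gives after squaring.
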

\begin{proof}
Note how our hypotheses on the nonlinearity $f$ imply the bound \eqref{nonlinterm} by taking 
$$
\by=\bcero,\quad y=v=0,\quad f_0(t, \bx)=f(\bcero, 0, 0, t, \bx).
$$ 

If $u$, $v$ belong to $H^1_0(\R^N_+)$, and $U$, $V$ in $H^1(\R^N_+)$ are their respective residuals, then
\begin{gather}
\int_{\R^N_+}[(U_t+u_t)w_t-(\nabla u-\nabla U)\cdot\nabla w+(U+f(\nabla u, u_t, u, t, \bx))w]\,d\bx\,dt=0,\nonumber\\
\int_{\R^N_+}[(V_t+v_t)w_t-(\nabla v-\nabla V)\cdot\nabla w+(V+f(\nabla v, v_t, v, t, \bx))w]\,d\bx\,dt=0,\nonumber
\end{gather}
for every $w\in H^1(\R^N_+)$. By subtracting one from the other, and letting 
$$
s=u-v,\quad S=U-V,
$$
we find 
\begin{gather}
\int_{\R^N_+}[(S_t+s_t)w_t-(\nabla s-\nabla S)\cdot\nabla w+\nonumber\\
(S+f(\nabla u, u_t, u, t, \bx)-f(\nabla v, v_t, v, t, \bx))w]\,d\bx\,dt=0,\nonumber
\end{gather}
for every $w\in H^1(\R^N_+)$. We can recast this identity, by using the formalism in the corresponding linear situation around Lemma \ref{hiperbolico}, as
\begin{gather}
\langle S, \bbS w\rangle_{\langle H^1(\R^N_+), H^1(\R^N_+)\rangle}+
\langle w, \cS s\rangle_{\langle H^1(\R^N_+), H^1(\R^N_+)^*\rangle}\nonumber\\
=-\int_{\R^N_+}[f(\nabla u, u_t, u, t, \bx)-f(\nabla v, v_t, v, t, \bx)-s)w]\,d\bx\,dt.
\nonumber
\end{gather}
The same manipulations as in the proof of Proposition  \ref{enhcoerhyp}, together with the assumed Lipschitz property on $f$,  lead immediately to
\begin{align}
\|s\|_{H^1_0(\R^N_+)}\le &\|\cS s\|_{H^1(\R^N_+)^*}\nonumber\\
=&\sup_{\|w\|_{H^1(\R^N_+)}\le 1}|\langle w, \cS s\rangle_{\langle H^1(\R^N_+), H^1(\R^N_+)^*\rangle}|\nonumber\\
=&\sup_{\|w\|_{H^1(\R^N_+)}\le 1}\left|\langle S, \bbS w\rangle+\int_{\R^N_+}(f_u-f_v-s)w\,d\bx\,dt\right|\nonumber\\
\le&\|S\|_{H^1(\R^N_+)}\,\|w\|_{H^1(\R^N_+)}+M\|s\|_{H^1_0(\R^N_0)}\|w\|_{H^1(\R^N_+)}\nonumber\\
\le&\|S\|_{H^1(\R^N_+)}+M\|s\|_{H^1_0(\R^N_+)}.\nonumber
\end{align}
We are putting
$$
f_u=f(\nabla u, u_t, u, t, \bx),\quad f_v=f(\nabla v, v_t, v, t, \bx),
$$
for the sake of notation. Note also the use of the embedding constant. 
The resulting final inequality, and the relative sizes of these constants, show our claim. 
\end{proof}

We turn to the second important property for $E$ to become an error functional, namely,
$$
\lim_{E'(\bu)\to\bcero}E(\bu)=0
$$
over bounded subsets of $H^1_0(\R^N_+)$. We assume that the non-linearity $f$ is $C^1$- with respect to $(\bz, z, u)$, and its partial derivatives are uniformly bounded. 
To compute the derivative $E'(u)$ at an arbitrary $u\in H^1_0(\R^N_+)$, we perform, as usual, the perturbation to first-order
$$
u\mapsto u+\epsilon v,\quad U\mapsto U+\epsilon V,
$$
 and introduce them in \eqref{resnlw}. After differentiation with respect to $\epsilon$, and setting $\epsilon=0$, we find
 \begin{equation}\label{perturbacion}
 \int_{\R^N_+}[(V_t+v_t)w_t-(\nabla v-\nabla V)\cdot\nabla w+(V+\overline f_\bz\cdot\nabla v+\overline f_z v_t+\overline f_u v)w]\,d\bx\,dt=0,
\end{equation}
for all $w\in H^1(\R^N_+)$, where
$$
\overline f_\bz(t, \bx)=f_\bz(\nabla u(t, \bx), u_t(t, \bx), u(t, \bx), t, \bx),
$$
and the same for $\overline f_z(t, \bx)$ and $\overline f_u(t, \bx)$. On the other hand, 
$$
\langle E'(u), v\rangle=\lim_{\epsilon\to0}\frac1\epsilon(E(u+\epsilon v)-E(u))
$$
is clearly given by
$$
\langle E'(u), v\rangle=\int_{\R^N_+}\left(\nabla U\cdot\nabla V+U_t\,V_t+U\,V\right)\,d\bx\,dt.
$$
If we use $w=U$ in \eqref{perturbacion}, we can write
$$
\langle E'(u), v\rangle=\int_{\R^N_+}\left[-v_t\,U_t+\nabla v\cdot\nabla U-(\overline f_\bz\cdot\nabla v+\overline f_z v_t+\overline f_u v)U\right]\,d\bx\,dt.
$$
The validity of this representation for every $v\in H^1_0(\R^N_+)$ enables us to identify $E'(u)$ with the triplet 
$$
(-U_t-\overline f_zU, \nabla U-U\overline f_\bz, -\overline f_uU),
$$
in the sense $E'(u)=\cS U$ where the linear operator
$$
\cS:H^1(\R^N_+)\mapsto H^{-1}(\R^N_+)
$$
is precisely determined by
$$
\langle\cS U, v\rangle=\int_{\R^N_+}\left[-v_t\,U_t+\nabla v\cdot\nabla U-(\overline f_\bz\cdot\nabla v+\overline f_z v_t+\overline f_u v)U\right]\,d\bx\,dt
$$
for every $v\in H^1_0(\R^N_+)$. Notice how this operator $\cS$ is well-defined because the non-linearity $f$ has been assumed to be globally Lipschitz with partial derivatives uniformly bounded. To conclude that $E'(u)\to\bcero$ implies $U\to0$ and, hence $E(u)=0$, we need to ensure that this operator $\cS$ is injective. We conjecture that this is so, without further requirements; but to simplify the argument here, we add the assumption that 
$$
|f_u(\bz, z, u, t, z)|\ge \epsilon>0
$$
for every $(\bz, z, u, t, z)$. Under this additional hypothesis, the condition
$$
(-U_t-\overline f_zU, \nabla U-U\overline f_\bz, -\overline f_uU)=\bcero
$$
automatically implies 
$$
U=\nabla U=U_t=0
$$
and hence $E(u)=0$. 

\begin{theorem}
Suppose the non-linearity $f(\bz, z, u, t, \bx)$ is $\C^1$- with respect to variables $(\bz, z, u)$, and:
\begin{enumerate}
\item $f(\bcero, 0, 0, t, \bx)\in L^2(\R^N_+)$;
\item the difference $f(\bz, z, u, t, \bx)-u$ is Lipschitz with respect to triplets $(\bz, z, u)$ in the sense
\begin{gather}
|f(\bz, z, u, t, \bx)-u-f(\by, y, v, t, \bx)+v|\le \nonumber\\
M\left(|\bz-\by|+|z-y|+\frac1D|u-v|^{(N+1)/(N-1)}\right),\nonumber
\end{gather}
where $D$ is the constant of the corresponding embedding 
$$
H^1(\R^N_+)\subset L^{2(N+1)/(N-1)}(\R^N_+),
$$ 
and $M<1$;
\item non-vanishing of $f_u$: there is some $\epsilon>0$ with 
$$
|f_u(\bz, z, u, t, z)|\ge \epsilon>0.
$$
\end{enumerate}
Then the problem
$$
u_{tt}-\Delta u+f(\nabla u, u_t, u, t, \bx)=0\hbox{ in }(t, \bx)\in\R^N_+
$$ 
under vanishing initial conditions 
$$
u(0, \bx)=u_t(0, \bx)=0,\quad \bx\in\R^N,
$$
admits a unique weak solution $u\in H^1_0(\R^N_+)$ in the sense \eqref{formadebilnlw}, and
$$
\|v-u\|^2_{H^1_0(\R^N_+)}\le K E(v),
$$
for any other $v\in H^1_0(\R^N_+)$.
\end{theorem}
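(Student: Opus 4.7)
The plan is to verify that $E$ in \eqref{funcerrnlw} is an error functional per Definition \ref{errorgeneral}, and then appeal to Proposition \ref{basicas} for both existence/uniqueness of the weak solution and the error estimate. Both constitutive conditions have, in effect, already been prepared in the preceding lemma and in the derivative calculation that immediately follows it; the proof amounts to assembling these pieces and applying the abstract result.

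First, the enhanced coercivity \eqref{enhcoer} is furnished verbatim by the lemma above, whose hypotheses are exactly items (1) and (2) of the theorem. This supplies a constant $K>0$ with $\|u-v\|_{H^1_0(\R^N_+)}^2 \le K(E(u)+E(v))$ for every $u,v\in H^1_0(\R^N_+)$, nothing more needs to be done for that part.

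Next, for the behavior condition \eqref{comporinff}, I would start from the identification $E'(u)=\cS U$ established just before the theorem, where $\cS:H^1(\R^N_+)\to H^{-1}(\R^N_+)$ is the linear operator with coefficients $\overline f_\bz,\overline f_z,\overline f_u$. Hypothesis (2) renders these coefficients uniformly bounded in $L^\infty$, so over any bounded subset of $H^1_0(\R^N_+)$ the family $\{\cS_u\}$ is equibounded. Hypothesis (3), providing $|\overline f_u|\ge\epsilon$, is then exploited to upgrade the pointwise injectivity $\cS_u U=\bcero\Rightarrow U\equiv 0$ (which the discussion already establishes) to a \emph{uniform} inverse estimate $\|U\|_{H^1(\R^N_+)}\le C\|\cS_u U\|_{H^{-1}(\R^N_+)}$ valid on bounded sets. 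Along any bounded sequence $\{u_j\}$ with $E'(u_j)\to\bcero$, this forces the corresponding residuals $U_j\to 0$ in $H^1(\R^N_+)$, and hence $E(u_j)\to 0$, which is exactly \eqref{comporinff}.

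With both properties in place, Proposition \ref{basicas} yields a unique $\overline u\in H^1_0(\R^N_+)$ with $E(\overline u)=0$; the defining relation \eqref{resnlw} for the residual then forces $\overline u$ to be a weak solution of \eqref{nonlinw} subject to \eqref{iniccond} in the sense \eqref{formadebilnlw}, and the claimed error estimate is nothing but \eqref{medidaerror}. The step I expect to be the main obstacle is precisely the uniform invertibility of $\cS_u$: one must exploit the non-vanishing bound on $f_u$ to dominate the indefinite wave-type principal part by a coercive zero-order term, presumably via a backward wave energy argument with bounded coefficients, and this is where the restriction on the dimension and the Sobolev embedding $H^1(\R^N_+)\subset L^{2(N+1)/(N-1)}(\R^N_+)$ implicitly enter through the Lipschitz hypothesis (2).
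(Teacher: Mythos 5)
Your overall architecture coincides with the paper's: enhanced coercivity \eqref{enhcoer} is supplied verbatim by the preceding lemma under hypotheses (1)--(2); the second property \eqref{comporinff} is to be read off from the identification $E'(u)=\cS U$, with $\cS$ the linear operator built from the bounded coefficients $\overline f_\bz,\overline f_z,\overline f_u$; and the conclusion (unique zero of $E$, weak solution via \eqref{resnlw}, and the estimate, which is \eqref{medidaerror}) then follows from Proposition \ref{basicas}. So the assembly is the right one.

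The gap is in the middle step, and you have flagged it yourself without closing it: you assert a uniform inverse estimate $\|U\|_{H^1(\R^N_+)}\le C\|\cS U\|$ over bounded subsets, ``presumably via a backward wave energy argument,'' but no such argument is given, and it is not a routine consequence of $|\overline f_u|\ge\epsilon$ together with boundedness of the coefficients: the principal part of $\cS$ is the indefinite wave-type bilinear form $\int(-v_tU_t+\nabla v\cdot\nabla U)$, so testing with $v=U$ yields no coercivity, and the zero-order term $-\overline f_u U$ controls $U$ only against the test function, not in $H^1$. As written, your proof establishes \eqref{comporinff} only modulo this unproven estimate. It is worth knowing that the paper is even more economical (arguably looser) at exactly this point: it uses hypothesis (3) only to check \emph{injectivity}, namely that the vanishing of the triplet $(-U_t-\overline f_zU,\ \nabla U-U\overline f_\bz,\ -\overline f_uU)$ forces $U=U_t=\nabla U=0$, hence $E(u)=0$ whenever $E'(u)=\bcero$; it does not prove the quantitative limit statement \eqref{comporinff} over bounded sets either, and explicitly conjectures that injectivity holds even without hypothesis (3). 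So your instinct that a uniform estimate is what is really required is correct, but neither your proposal nor the paper's own argument supplies it; to complete your route you would have to actually produce that estimate (for instance by a duality or energy argument for the linear problem with bounded coefficients), which is the genuinely nontrivial content hidden behind condition \eqref{comporinff} in this example.
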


Without the global Lipschitzianity condition on $f$ in the previous statement, but only the smoothness with respect to triplets $(\bz, z, u)$, 
only a local existence result is possible. This is standard. 

\section{The steady Navier-Stokes system}
For a bounded, Lipschitz, connected domain $\Omega\subset\R^N$, $N=2, 3$, we are concerned with the steady Navier-Stokes system
\begin{equation}\label{navsto}
-\nu\Delta\bu+\nabla\bu\,\bu+\nabla u=\bbf,\quad \dv\bu=0\hbox{ in }\Omega,
\end{equation}
for a vector field $\bu\in H^1_0(\Omega; \R^N)$, and a scalar, pressure field $u\in L^2(\Omega)$. The external force field $\bbf$ is assumed to belong to the dual space $H^{-1}(\Omega; \R^N)$. The parameter $\nu>0$ is viscosity. 
Because of the incompressibility condition, the system can also be written in the form
$$
-\nu\Delta\bu+\dv(\bu\otimes\bu)+\nabla u=\bbf,\quad \dv\bu=0\hbox{ in }\Omega.
$$
A weak solution is a divergence-free vector field $\bu\in H^1_0(\Omega; \R^N)$, and a scalar field $u\in L^2(\Omega)$, normalized by demanding vanishing average in $\Omega$,  such that
$$
\int_\Omega [\nu\nabla\bu(\bx):\nabla\bv(\bx)-\bu(\bx)\nabla\bv(\bx)\bu(\bx)-u(\bx)\dv\bv(\bx)]\,d\bx=\langle\bbf, \bv\rangle
$$
where the right-hand side stands for the duality pairing 
$$
H^{-1}(\Omega; \R^N)-H^1_0(\Omega; \R^N).
$$ 

We propose to look at this problem incorporating the incompressibility constraint into the space as part of feasibility as is usually done; there is also the alternative to treat the same situation incorporating a penalization on the divergence into the functional, instead of including it into the class of admissible fields (see \cite{lemmunped}). The pressure field rises as the multiplier corresponding to the divergence-free constraint. 

Let 
$$
\bbD\equiv H^1_{0, div}(\Omega; \R^N)=\{\bu\in H^1_0(\Omega; \R^N): \dv\bu=0\hbox{ in }\Omega\}.
$$
For every such $\bu$, we determine its residual $\bU$, in a unique way, as the solution of the restricted variational problem
$$
\hbox{Minimize in }\bV\in \bbD:\quad \int_\Omega\left[\frac12|\nabla\bV|^2+(\nu\nabla\bu-\bu\otimes\bu):\nabla\bV\right]\,d\bx-\langle\bbf, \bV\rangle.
$$
The pressure $v$ comes as the corresponding multiplier for the divergence-free constraint, in such a way that the unique minimizer $\bU$ is determined through the variational equality
$$
\int_\Omega (\nabla\bU:\nabla\bV+(\nu\nabla\bu-\bu\otimes\bu):\nabla\bV+v\dv\bV)\,d\bx-\langle\bbf, \bV\rangle
$$
valid for every test field $\bV\in H^1_0(\Omega; \R^N)$. This is the weak form of the optimality condition associated with the previous variational problem
\begin{equation}\label{residuo}
-\Delta\bU-\nu\Delta\bu+\dv(\bu\otimes\bu)-\bbf+\nabla v=\bcero\hbox{ in }\Omega,
\end{equation}
for $\bU\in\bbD$. The multiplier $v\in L^2_0(\Omega)$ (square-integrable fields with a vanishing average) is the pressure. We define
\begin{equation}\label{funcionalns}
E(\bu):\bbD\to\R^+,\quad E(\bu)=\frac12\int_\Omega|\nabla\bU(\bx)|^2\,d\bx.
\end{equation}

For $\bu, \bv\in\bbD$, let $\bU, \bV\in\bbD$ be their respective residuals, and $u, v$ their respective pressure fields. Put 
$$
\bw=\bu-\bv\in\bbD,\quad \bW=\bU-\bV\in\bbD, \quad w=u-v\in L^2(\Omega).
$$
It is elementary to find, by subtraction  of the corresponding system \eqref{residuo} for $\bu$ and $\bv$, that
\begin{equation}\label{esta}
-\Delta\bW-\nu\Delta\bw+\dv(\bu\otimes\bu-\bv\otimes\bv)+\nabla w=\bcero\hbox{ in }\Omega.
\end{equation}
It is the presence of the non-linear term $\dv(\bu\otimes\bu)$, so fundamental to the Navier-Stokes system, what makes the situation different compared to a linear setting. 

We write
$$
\dv(\bu\otimes\bu-\bv\otimes\bv)=\dv(\bw\otimes\bu)+\dv(\bv\otimes\bw),
$$
and bear in mind the well-know fact
\begin{equation}\label{identidades}
\int_\Omega(\bv\otimes\bv:\nabla\bu+\bu\otimes\bv:\nabla\bv)\,d\bx=\int_\Omega\bu\otimes\bv:\nabla\bu\,d\bx=0
\end{equation}
for every $\bu, \bv\in\bbD$. If we use $\bw$ as a test function in \eqref{esta}, we find that
\begin{equation}\label{uno1}
\int_\Omega[\nabla\bW:\nabla \bw+\nu|\nabla\bw|^2-(\bw\otimes\bu):\nabla\bw-(\bv\otimes\bw):\nabla\bw]\,d\bx=0.
\end{equation}
Note that the integral involving $w$ vanishes because $\bw$ is divergence-free. By \eqref{identidades}, 
\begin{gather}
\int_\Omega(\bw\otimes\bu):\nabla\bw\,d\bx=0,\nonumber\\ 
\int_\Omega(\bv\otimes\bw):\nabla\bw\,d\bx=-\int_\Omega (\bw\otimes\bw):\nabla\bv\,d\bx.\nonumber
\end{gather}
Identity \eqref{uno1} becomes
\begin{equation}\label{dos2}
\int_\Omega[\nabla\bW:\nabla \bw+\nu|\nabla\bw|^2+(\bw\otimes\bw):\nabla\bv]\,d\bx=0.
\end{equation}
We can use $\bv$ as a test function in the corresponding system \eqref{residuo} for $\bv$ to have
$$
\int_\Omega(\nabla \bV:\nabla\bv+\nu|\nabla\bv|^2-\bbf\cdot\bv)\,d\bx=0.
$$
Again we have utilized that fields in $\bbD$ are divergence-free, and the second identity in \eqref{identidades}. This last identity implies, in an elementary way, that
\begin{equation}\label{tres3}
\nu\|\bv\|_{H^1_0(\Omega; \R^N)}\le \|\bbf\|_{H^{-1}(\Omega; \R^N)}+\sqrt{2E(\bv)}.
\end{equation}
Recall that
$$
2E(\bv)=\|\bV\|^2_{H^1_0(\Omega; \R^N)}.
$$
We now have all the suitable elements to exploit \eqref{dos2}. If $C=C(n)$ is the constant of the Sobolev embedding of $H^1(\Omega)$ into $L^4(\Omega)$ for $N\le4$, then \eqref{dos2} leads to
$$
\nu\|\bw\|^2\le \|\bW\|\,\|\bw\|+C^2\|\bw\|^2\|\bv\|
$$
where all norms here are in $H^1_0(\Omega; \R^N)$. On the other hand, if we replace the size of $\bv$ by the estimate \eqref{tres3}, we are carried to 
$$
\nu\|\bw\|^2\le \|\bW\|\,\|\bw\|+\frac {C^2}\nu\|\bw\|^2\left(\|\bbf\|_{H^{-1}(\Omega; \R^N)}+\sqrt{2E(\bv)}\right),
$$
or
$$
\left(\nu-\frac {C^2}\nu\left(\|\bbf\|+\sqrt{2E(\bv)}\right)\right)\|\bw\|^2\le \|\bW\|\,\|\bw\|.
$$
Since
$$
\|\bW\|=\|\bU-\bV\|\le \|\bU\|+\|\bV\|,
$$
we would have
$$
\left(\nu-\frac {C^2}\nu\left(\|\bbf\|+\sqrt{2E(\bv)}\right)\right)^2\|\bu-\bv\|^2\le 4 (E(\bu)+E(\bv)).
$$
The form of this inequality leads us to the following interesting generalization of Definition \ref{errorgeneral}.
\begin{definition}\label{errorconcota}
A non-negative, $\C^1$-functional 
$$
E(\bu):\bbH\to\R^+
$$ 
defined over a Hilbert space $\bbH$ is called an error functional if 
there is some positive constant $c$ (including $c=+\infty$) such that:
\begin{enumerate}
\item behavior as $E'\to\bcero$:
$$
\lim_{E'(\bu)\to\bcero}E(\bu)=0
$$
over bounded subsets of $\bbH$; and
\item enhanced coercivity: there is a positive constant $C$ (that might depend on $c$), such that for every pair $\bu, \bv$ 
belonging to the sub-level set $\{E\le c\}$, 
we have
$$
\|\bu-\bv\|^2\le C(E(\bu)+E(\bv)).
$$
\end{enumerate}
\end{definition}
It is interesting to note that the sub-level sets $\{E\le d\}$ for $d<c$, for a functional $E$ verifying Definition \ref{errorconcota}, cannot maintain several connected components. 

Because our basic result Proposition \ref{basicas} is concerned with zeros of $E$, it is still valid under Definition \ref{errorconcota}.
\begin{proposition}\label{basicass}
Let $E:\bbH\to\R^+$ be an error functional according to Definition \ref{errorconcota}. Then there is a unique minimizer $\bu_\infty\in\bbH$ such that  $E(\bu_\infty)=0$, and
$$
\|\bu-\bu_\infty\|^2\le CE(\bu),
$$
for every $\bu\in\bbH$
provided $E(\bu)$ is sufficiently small ($E(\bu)\le c$, the constant in Definition \ref{errorconcota}).
\end{proposition}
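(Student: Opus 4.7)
The plan is to mirror the proof of Proposition \ref{basicas}, but carry out every step inside the sub-level set $\{E\le c\}$, which is where the enhanced coercivity of Definition \ref{errorconcota} is available. The output will still be a unique zero of $E$ together with the a priori bound, but the basic dichotomy ``inside vs.\ outside the sub-level set'' has to be tracked throughout.

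First I would establish $\inf_{\bu\in\bbH}E(\bu)=0$. The $\C^1$-smoothness and non-negativity of $E$ force $\inf_\bu\|E'(\bu)\|=0$ (otherwise a normalized gradient-descent flow would drive $E$ to $-\infty$, contradicting $E\ge0$), and Ekeland's variational principle refines this to a Palais--Smale sequence at the infimum: a sequence $\{\bu_j\}$ with $E(\bu_j)\to\inf E$ and $\|E'(\bu_j)\|\to\bcero$. For $j$ large enough, $E(\bu_j)\le c$; fixing one such index $j_0$, the enhanced coercivity (2) of Definition \ref{errorconcota} gives
$$
\|\bu_j-\bu_{j_0}\|^2\le C(E(\bu_j)+E(\bu_{j_0}))\le 2Cc
$$
for all subsequent $j$, so the tail of $\{\bu_j\}$ is bounded in $\bbH$. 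Condition (1) of Definition \ref{errorconcota} then forces $E(\bu_j)\to 0$, and hence $\inf E=0$.

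Once $\inf E=0$ is known, the remainder is routine and parallels Proposition \ref{basicas}. The minimizing sequence $\{\bu_j\}$ is Cauchy, because for $j,k$ large both terms sit in $\{E\le c\}$ and $\|\bu_j-\bu_k\|^2\le C(E(\bu_j)+E(\bu_k))\to 0$; its limit $\bu_\infty$ satisfies $E(\bu_\infty)=0$ by continuity of $E$. Uniqueness is automatic: any two zeros of $E$ lie in $\{E\le c\}$ and enhanced coercivity collapses their distance to zero. For the a priori estimate, any $\bu$ with $E(\bu)\le c$ sits in $\{E\le c\}$ together with $\bu_\infty$, and enhanced coercivity gives $\|\bu-\bu_\infty\|^2\le C(E(\bu)+E(\bu_\infty))=CE(\bu)$.

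The main obstacle, compared to Proposition \ref{basicas}, is concentrated in the first step: one cannot directly extract a bounded Palais--Smale sequence from enhanced coercivity, because that coercivity is now restricted to the sub-level set. The argument therefore hinges on the observation that a Palais--Smale sequence at the infimum eventually enters $\{E\le c\}$, at which point the local coercivity pins it inside a ball of radius $O(\sqrt{c})$ around any fixed reference point and condition (1) becomes applicable. Implicit here is that $\{E\le c\}$ is non-empty; in the applications envisioned (e.g., the Navier--Stokes example motivating Definition \ref{errorconcota}), this accessibility of the sub-level set is a standing hypothesis to be checked separately.
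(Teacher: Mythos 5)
Your proof is correct and is essentially the argument the paper intends: it re-runs the proof of Proposition \ref{basicas} (an Ekeland/Palais--Smale sequence at the infimum level, boundedness and the Cauchy property supplied by the enhanced coercivity, condition (1) forcing $E(\bu_j)\to 0$, then uniqueness and the error estimate from coercivity), with the only new bookkeeping being that coercivity is invoked exclusively inside the sub-level set $\{E\le c\}$, exactly where Definition \ref{errorconcota} provides it. The caveat you flag at the end is the one genuine subtlety: as printed, the definition does not by itself guarantee that the almost-critical minimizing sequence ever enters $\{E\le c\}$ (one needs, say, $\inf_{\bbH}E<c$), and without some such accessibility hypothesis conditions (1)--(2) can hold vacuously while $E$ has no zero, so your proof --- like the paper's one-line justification --- is really conditional on it. The paper passes over this silently; in its Navier--Stokes application the issue is harmless because the inequality $\nu\sqrt{2E(\bu)}\le\|E'(\bu)\|$ established there holds globally, so $E'(\bu)\to\bcero$ drives $E(\bu)\to 0$ with no boundedness or sub-level restriction needed.
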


The calculations that motivated this generalization yield the following.

\begin{proposition}
Let $N\le4$, and $\Omega\subset\R^N$, a bounded, Lipschitz, connected domain. If $\nu>0$ and $\bbf\in H^{-1}(\Omega; \R^N)$ are such that the quotient
$\|\bbf\|/\nu^2$ is sufficiently small, then the functional $E$ in \eqref{funcionalns} complies with Definition \ref{errorconcota}. 
\end{proposition}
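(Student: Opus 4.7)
The enhanced coercivity (2) in Definition \ref{errorconcota} is already a direct consequence of the conditional estimate
\begin{equation*}
\Bigl(\nu-\tfrac{C^2}{\nu}\bigl(\|\bbf\|+\sqrt{2E(\bv)}\bigr)\Bigr)^2\|\bu-\bv\|^2\le 4(E(\bu)+E(\bv))
\end{equation*}
derived just above the statement. The plan is to choose the threshold $c>0$ in Definition \ref{errorconcota} small enough that, whenever $E(\bv)\le c$, the bracket on the left-hand side is bounded below by $\nu/2$. This is achievable precisely when $\|\bbf\|/\nu^2$ is small: assuming for concreteness $C^2\|\bbf\|/\nu^2\le 1/4$, I would set $c\le\nu^4/(32C^4)$, so that $C^2\sqrt{2c}/\nu^2\le 1/4$ as well. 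The resulting enhanced coercivity on the sub-level set $\{E\le c\}$ then reads $\|\bu-\bv\|^2\le(16/\nu^2)(E(\bu)+E(\bv))$.

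To establish condition (1), I would follow the strategy used in the non-linear wave and elliptic sections: perturb $\bu\mapsto\bu+\epsilon\bv$, $\bU\mapsto\bU+\epsilon\bV$, $u\mapsto u+\epsilon u'$ in \eqref{residuo}, obtaining the linearized residual equation
\begin{equation*}
-\Delta\bV-\nu\Delta\bv+\dv(\bu\otimes\bv)+\dv(\bv\otimes\bu)+\nabla u'=\bcero\quad\text{in }\Omega
\end{equation*}
for $\bV\in\bbD$. Direct differentiation of $E$ gives $\langle E'(\bu),\bv\rangle=\int_\Omega\nabla\bU:\nabla\bV\,d\bx$, and using $\bU\in\bbD$ as a test field in the linearized equation kills the pressure multiplier $u'$ and recasts this quantity as
\begin{equation*}
\langle E'(\bu),\bv\rangle=\int_\Omega\bigl[-\nu\nabla\bv:\nabla\bU+(\bu\otimes\bv+\bv\otimes\bu):\nabla\bU\bigr]\,d\bx.
\end{equation*}
Specialising to $\bv=\bU$, invoking the identities \eqref{identidades} to collapse the $\bu\otimes\bU:\nabla\bU$ and $\bU\otimes\bu:\nabla\bU$ terms down to $-\int_\Omega (\bU\otimes\bU):\nabla\bu\,d\bx$, and applying the Sobolev embedding $H^1\hookrightarrow L^4$ (valid for $N\le 4$) to this residual cubic term yields
\begin{equation*}
\bigl(\nu-C^2\|\bu\|_{H^1_0}\bigr)\|\bU\|^2_{H^1_0}\le |\langle E'(\bu),\bU\rangle|.
\end{equation*}
Provided $\bu\in\{E\le c\}$, the estimate \eqref{tres3} together with the calibration above forces $C^2\|\bu\|_{H^1_0}\le\nu/2$, and one concludes $\|\bU\|_{H^1_0}\le(2/\nu)\|E'(\bu)\|_{\bbD^*}$; hence $E(\bu)=\tfrac12\|\bU\|^2_{H^1_0}\to 0$ whenever $E'(\bu)\to\bcero$ on that sub-level set.

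The main obstacle is the joint calibration of the two smallness parameters: the threshold $c$ and the size of $\|\bbf\|/\nu^2$ must be chosen simultaneously so that both the quadratic bracket in the enhanced coercivity and the factor $\nu-C^2\|\bu\|_{H^1_0}$ governing injectivity of the linearized map stay strictly positive on the same sub-level set $\{E\le c\}$. This is essentially a quantitative form of the classical smallness-of-data hypothesis for uniqueness of stationary Navier-Stokes solutions. Once the calibration is fixed, both requirements of Definition \ref{errorconcota} hold on $\{E\le c\}$, and the proposition follows.
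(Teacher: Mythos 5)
Your handling of the enhanced coercivity is exactly the paper's: the inequality
\begin{equation*}
\Bigl(\nu-\tfrac{C^2}{\nu}\bigl(\|\bbf\|+\sqrt{2E(\bv)}\bigr)\Bigr)^2\|\bu-\bv\|^2\le 4(E(\bu)+E(\bv))
\end{equation*}
is derived in the text immediately before the statement, and the paper obtains condition (2) of Definition \ref{errorconcota} by precisely the calibration of $c$ against $\|\bbf\|/\nu^2$ that you make explicit. Where you genuinely diverge is in the link between $E'$ and $E$. The paper sets $\bw=E'(\bu)\in\bbD$, tests the resulting identity with $\bv=\bU$, and asserts that \emph{both} convective terms drop out by \eqref{identidades}, arriving at the unconditional estimate $\nu\|\nabla\bU\|\le\|\nabla\bw\|$, i.e. $\nu\sqrt{2E(\bu)}\le\|E'(\bu)\|$ with no smallness and no restriction to a sub-level set. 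You instead observe that \eqref{identidades} only kills $\int_\Omega\bU\otimes\bu:\nabla\bU\,d\bx$, while $\int_\Omega\bu\otimes\bU:\nabla\bU\,d\bx=-\int_\Omega\bU\otimes\bU:\nabla\bu\,d\bx$ survives, and you absorb this cubic remainder via the $H^1\hookrightarrow L^4$ embedding ($N\le4$) and the a priori bound \eqref{tres3}, getting $(\nu-C^2\|\bu\|_{H^1_0})\|\bU\|\le\|E'(\bu)\|$ on $\{E\le c\}$. Your bookkeeping of the trilinear term is the careful one: only one of the two terms vanishes, so the remainder is genuinely present and some smallness of $\|\bu\|$ (supplied, as you note, by \eqref{tres3} together with the smallness of $\|\bbf\|/\nu^2$ and of $c$) is needed; the paper's unconditional bound glosses over this. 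The price of your route is that you verify condition (1) of Definition \ref{errorconcota} only on the sub-level set $\{E\le c\}$, whereas the definition literally asks for the limit over bounded subsets of $\bbH$; since Proposition \ref{basicass} and the final theorem only operate in the small-$E$ regime (and $\{E\le c\}$ is nonempty and bounded there), this restriction is harmless, but you should state it as a (mild) weakening of the definition rather than claim it verbatim.
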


We now turn to examining the interconnection between $E$ and $E'$. To this end, we gather here \eqref{residuo} and \eqref{funcionalns}
\begin{gather}
E(\bu)=\frac12\int_\Omega|\nabla\bU(\bx)|^2\,d\bx,\nonumber\\
 -\Delta\bU-\nu\Delta\bu+\dv(\bu\otimes\bu)-\bbf+\nabla u=\bcero\hbox{ in }\Omega,\nonumber
 \end{gather}
 for $\bu, \bU\in\bbD$ and $u\in L^2_0(\Omega)$. If we replace
 $$
 \bu\mapsto\bu+\epsilon\bv,\quad \bU\mapsto\bU+\epsilon\bV,\quad u\mapsto u+\epsilon v,
 $$
to first-order in $\epsilon$, we would have
\begin{gather}
E(\bu+\epsilon\bv)=\frac12\int_\Omega|\nabla\bU+\epsilon\nabla\bV|^2\,d\bx,\nonumber\\
 -\Delta(\bU+\epsilon\bV)-\nu\Delta(\bu+\epsilon\bv)+\dv((\bu+\epsilon\bv)\otimes(\bu+\epsilon\bv))\nonumber\\
 -\bbf+\nabla (u+\epsilon v)=\bcero.\nonumber
 \end{gather}
By differentiating with respect to $\epsilon$, and setting $\epsilon=0$, we arrive at
\begin{gather}
\langle E'(\bu), \bv\rangle=\int_\Omega\nabla\bU\cdot\nabla\bV\,d\bx,\nonumber\\
 -\Delta\bV-\nu\Delta\bv+\dv(\bu\otimes\bv+\bv\otimes\bu)+\nabla v=\bcero.\nonumber
 \end{gather}
If we use $\bU$ as a test function in this last system, we realize that
$$
\langle E'(\bu), \bv\rangle=\int_\Omega [-\nu\nabla\bv\cdot\nabla\bU+\bu\otimes\bv:\nabla\bU+\bv\otimes\bu:\nabla\bU]\,d\bx
$$
for every $\bv\in\bbD$. If we set $\bw=E'(\bu)\in\bbD$, then
$$
\int_\Omega[\nabla\bw\cdot\nabla \bv+\nu\nabla\bU\cdot\nabla\bv+(\nabla\bU\bu+\bu\nabla\bU)\cdot\bv]\,d\bx,
$$
for every $\bv\in\bbD$. In particular, if we plug $\bv=\bU$ in, bearing in mind that due to \eqref{identidades} the last two terms drop out, we are left with
$$
\nu\|\nabla\bU\|^2=-\langle\nabla\bw, \nabla\bU\rangle,
$$
or
$$
\nu\|\nabla\bU\|\le\|\nabla\bw\|.
$$
If the term on the right-hand side, which is $\|E'(\bu)\|$, tends to zero, so does the one on the left-hand side, which is $\nu\sqrt{2E(\bu)}$. This shows the second basic property of an error functional.
As a result, Proposition \ref{basicass} can be applied.

\begin{theorem}
If $\Omega\subset\R^N$, $N\le4$, is a bounded, Lipschitz, connected domain, and $\nu>0$ and $\bbf\in H^{-1}(\Omega; \R^N)$ in the steady Navier-Stokes system \eqref{navsto} are such that the quotient
$\|\bbf\|/\nu^2$ is sufficiently small, then there is a unique weak solution $\bu$ in $\bbD$, and
$$
\|\bu-\bv\|^2_{H^1_0(\Omega; \R^N)}\le CE(\bv)
$$
provided $E(\bv)$ is sufficiently small. 
\end{theorem}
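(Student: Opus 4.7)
The plan is simply to assemble the ingredients already laid out in the paragraphs preceding the statement and feed them into Proposition~\ref{basicass}. All the heavy work has in fact been done: the functional $E$ defined in \eqref{funcionalns} has been built so as to vanish precisely on weak solutions of \eqref{navsto}, and the residual system \eqref{residuo} makes $E$ a well-defined, non-negative, smooth functional on $\bbD$. So the proof will reduce to checking the two items in Definition~\ref{errorconcota} and quoting Proposition~\ref{basicass}.

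First I would record the enhanced coercivity on a sublevel set $\{E\le c\}$. This is exactly the computation already carried out: starting from the subtracted residual equation \eqref{esta}, testing with $\bw=\bu-\bv$, and using the cancellation identities \eqref{identidades}, one arrives at \eqref{dos2}; combining this with the a~priori bound \eqref{tres3} for $\|\bv\|_{H^1_0}$ in terms of $\|\bbf\|$ and $\sqrt{E(\bv)}$ yields
$$
\Bigl(\nu-\tfrac{C^2}{\nu}\bigl(\|\bbf\|+\sqrt{2E(\bv)}\bigr)\Bigr)^2\|\bu-\bv\|^2\le 4\bigl(E(\bu)+E(\bv)\bigr).
$$
Choosing $c$ so small (depending on $\nu$ and $\|\bbf\|$) that the parenthesis on the left is bounded below by a positive constant whenever $E(\bu),E(\bv)\le c$, and then imposing that $\|\bbf\|/\nu^2$ be small enough so that such a $c>0$ exists, supplies the enhanced coercivity inequality required by Definition~\ref{errorconcota}. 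This is precisely the content of the Proposition stated just above the theorem.

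Second, I would record the $E'\to\bcero$ behavior. The derivative computation carried out via the linearized residual system gives, upon testing that linearization with $\bU$ and exploiting \eqref{identidades} to kill the convective terms, the clean estimate
$$
\nu\|\nabla\bU\|_{L^2}\le \|E'(\bu)\|,
$$
so that $E'(\bu)\to\bcero$ forces $\|\nabla\bU\|\to 0$, hence $E(\bu)=\tfrac12\|\nabla\bU\|^2\to 0$. This verifies condition~(1) of Definition~\ref{errorconcota} (in fact globally on $\bbD$, not merely on bounded subsets).

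With both conditions of Definition~\ref{errorconcota} in hand, Proposition~\ref{basicass} delivers a unique $\bu_\infty\in\bbD$ with $E(\bu_\infty)=0$, together with the error bound $\|\bv-\bu_\infty\|^2\le C\,E(\bv)$ whenever $E(\bv)\le c$. Since the zeros of $E$ are exactly the weak solutions of \eqref{navsto} (the associated pressure $u\in L^2_0(\Omega)$ being recovered as the Lagrange multiplier of the divergence-free constraint in \eqref{residuo} when $\bU=\bcero$), this is the desired conclusion. The only delicate step is the first one: ensuring that the quantitative smallness assumption on $\|\bbf\|/\nu^2$ is calibrated so that a positive sublevel threshold $c$ genuinely exists; once that calibration is fixed, the rest is bookkeeping.
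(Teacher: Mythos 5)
Your proposal is correct and follows essentially the same route as the paper: it verifies the two conditions of Definition~\ref{errorconcota} via the subtracted residual identity \eqref{esta}--\eqref{dos2} combined with \eqref{tres3}, and via the linearized-system estimate $\nu\|\nabla\bU\|\le\|E'(\bu)\|$, then invokes Proposition~\ref{basicass}. The only addition you make explicit, which the paper leaves implicit, is the remark that zeros of $E$ correspond exactly to weak solutions with the pressure recovered as the multiplier, and that is a welcome clarification rather than a deviation.
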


\end{document}